\documentclass[11pt]{amsart}

\usepackage{amsmath,amssymb,amsthm,mathtools}
\usepackage{enumitem}
\usepackage{booktabs}
\usepackage{array}
\usepackage{microtype}
\usepackage{hyperref}
\hypersetup{
  colorlinks=true,
  linkcolor=blue,
  citecolor=blue,
  urlcolor=blue,
  pdftitle={First-Order Laws for Random Geometric Graphs on the Torus},
  pdfauthor={Simi Haber, Mostafa Mirabi, and Saharon Shelah},
  pdfkeywords={random geometric graph, zero-one law, convergence law, torus, Poisson convergence, convex body}
}

\newtheorem{theorem}{Theorem}[section]

\newtheorem{lemma}[theorem]{Lemma}
\newtheorem{corollary}[theorem]{Corollary}
\theoremstyle{definition}

\theoremstyle{remark}
\newtheorem{remark}[theorem]{Remark}

\newcommand{\T}{\mathbb T}
\newcommand{\R}{\mathbb R}
\newcommand{\Z}{\mathbb Z}
\newcommand{\E}{\mathbb E}
\newcommand{\Prob}{\mathbb P}
\newcommand{\1}{\mathbf 1}
\newcommand{\Poi}{\operatorname{Poisson}}
\newcommand{\vol}{\operatorname{vol}}
\newcommand{\area}{\operatorname{area}}

\newcommand{\symdiff}{\triangle}

\title[First-Order Laws for Random Geometric Graphs on the Torus]{First-Order Laws for Random Geometric Graphs on the Torus}

\author{Simi Haber}
\address{\newline Bar-Ilan University, Ramat Gan, Israel}
\email{simi@math.biu.ac.il}

\author{Mostafa Mirabi}
\address{\newline The Taft School, Watertown, CT 06795, USA and \newline  Wesleyan University, Middletown, CT 06459, USA}
\email{mmirabi@wesleyan.edu}
\urladdr{https://sites.google.com/site/mostafamirabi}

\author{Saharon Shelah}
\address{\newline Hebrew University of Jerusalem, Jerusalem, Israel}
\email{saharon.shelah@mail.huji.ac.il}
\urladdr{https://shelah.logic.at}

\date{}

\subjclass[2020]{Primary 05C80, 03C13; Secondary 60D05, 60C05, 52A20}
\keywords{random geometric graph, first-order logic, zero-one law, Poisson convergence, torus}
\thanks{The third author research partially supported by the Israel Science Foundation
(ISF) grant no: 1838/19, and Israel Science Foundation (ISF) grant no: 2320/23; Research partially
supported by the grant “Independent Theories” NSF-BSF, (BSF 3013005232). }

\begin{document}

\begin{abstract}
Let $G_D(n;r)$ be the random geometric graph generated by $n$ independent
uniform points on the $D$-dimensional torus, with adjacency defined by
torus $L^\infty$-distance at most $r$.  We study first-order zero-one and
convergence laws at fixed radius and in sparse regimes.  At fixed radius,
we determine the asymptotics of the expected number of adjacent twin
pairs in every dimension.  In dimension two, more generally, for each
origin-symmetric convex connection body
$K\subset(-1/2,1/2)^2$, the twin count converges to a Poisson variable
with mean $\operatorname{area}(K^\circ)/16$; hence the zero-one law fails
for all fixed $0<r<1/2$ in the $L^\infty$ and Euclidean models.  At each
critical component threshold
$n^k r_n^{D(k-1)}\to a\in(0,\infty)$, the numbers of components of the
feasible connected $k$-vertex types converge jointly to independent
Poisson variables, yielding the complete first-order convergence law.
Between consecutive thresholds a zero-one law holds.  For $D\ge3$, we
also construct a definable common-neighborhood configuration of
probability order $1/n$.
\end{abstract}

\maketitle

\section{Introduction}

A sequence of random graphs satisfies a first-order zero-one law if the
probability of every first-order graph sentence tends to either zero or one.
It satisfies a first-order convergence law if each of these probabilities
has a limit, not necessarily zero or one.  The classical constant-density
zero-one law for the Erd\H{o}s--R\'enyi graph was proved independently by
Glebskii, Kogan, Liogonkii and Talanov \cite{Glebskii1969} and by Fagin
\cite{Fagin1976}.  The sparse theory was developed in particular by Shelah
and Spencer \cite{ShelahSpencer1988}; see also \cite{Libkin2004,Spencer2001}.

We study these questions for random geometric graphs.  Let
$X_1,\ldots,X_n$ be independent uniform points of the torus
$\T^D=(\R/\Z)^D$.  For $0\le r\le1/2$, let $G_D(n;r)$ be the graph on
$[n]$ in which
\[
   ij\in E(G_D(n;r))
   \quad\Longleftrightarrow\quad
   d_\infty(X_i,X_j)\le r,
\]
where $d_\infty$ is the torus $L^\infty$ metric.  The restriction
$r\le1/2$ is natural: the $L^\infty$-diameter of $\T^D$ is $1/2$, so the
graph is complete when $r\ge1/2$.

We also use a planar convex connection model.  If
$K\subset(-1/2,1/2)^2$ is an origin-symmetric convex body, let $G_K(n)$ be
the graph on $[n]$ in which $ij$ is an edge exactly when the representative
of $X_j-X_i$ in $[-1/2,1/2)^2$ belongs to $K$.  The square
$K=[-r,r]^2$ gives the planar $L^\infty$ model, while $K=rB_2$ gives the
usual Euclidean torus model for $0<r<1/2$, where $B_2$ is the unit
Euclidean disk.

Random geometric graphs originate in Gilbert's model of random plane
networks \cite{Gilbert1961}; a standard reference is Penrose
\cite{Penrose2003}.  Their logical behavior differs sharply from that of
binomial random graphs because nearby edge events are strongly dependent.
A small geometric coincidence can force many graph-theoretic relations at
once, and it is often these local coincidences that determine the limiting
first-order behavior.

McColm proved a zero-one law for the fixed-radius model on the circle
\cite{McColm1999}.  In the two-dimensional flat torus with the Euclidean
metric, Spencer and Agarwal found a first-order property with a non-trivial
limit for sufficiently small fixed radius \cite{SpencerAgarwal2006}.
Haber, Hershko and M\"uller later showed that, for sufficiently small
fixed radius, the convergence law itself fails in that model
\cite{HHM2022}.
Their proof interprets finite graph
structures and, ultimately, finite arithmetic.

Here adjacent twins give a direct obstruction to the zero-one law.  For an
arbitrary origin-symmetric planar convex connection body $K$, their number
has a Poisson limit whose mean is $\area(K^\circ)/16$.  In particular, for
the Euclidean ball $rB_2$ the mean is $\pi/(16r^2)$, so the zero-one law
fails for every $0<r<1/2$.  The same theorem gives the mean $1/(8r^2)$ for the planar $L^\infty$ model.  This does not replace the stronger non-convergence theorem of \cite{HHM2022} in the range treated there; it gives a simpler explicit zero-one obstruction and shows that the mechanism is not special to axis-parallel squares.

The same obstruction disappears in higher dimensions.  For fixed $D$, the expected number of adjacent twin pairs is of order $n^{2-D}$, and hence tends to zero when $D\ge3$.  We do not settle the fixed-radius zero-one or convergence law in these dimensions.  We do, however, prove that for every $D\ge3$ and every fixed $0<r<1/2$ there is a definable common-neighborhood configuration with probability of order $1/n$.  This is the critical scale
one would expect in an extension-based interpretation of arithmetic.  A full interpretation would require uniform joint estimates and concentration
for many such configurations; the marginal estimate proved here is not
sufficient by itself.

The sparse regimes have a different structure.  At the first edge scale the
graph is asymptotically a matching with isolated vertices, and its entire
first-order convergence law is determined by the limiting Poisson edge
count.  The same picture extends to every component threshold.  At the
$k$-vertex threshold, the counts of all strictly $D$-geometric connected
$k$-vertex component types have a joint independent Poisson limit, every
strictly $D$-geometric type of smaller order occurs many times, and larger
components disappear.  A component-wise Ehrenfeucht--Fra\"isse argument then
gives the complete first-order convergence law.  Between consecutive
component-size thresholds all feasible bounded component types occur many
times, and first-order randomness disappears.

\subsection{Main results}

We first state the planar convex-body result.  For a convex body
$K\subset\R^2$ containing the origin, write
\[
   K^\circ=\{u\in\R^2:\langle u,x\rangle\le1
             \text{ for every }x\in K\}
\]
for its polar body.

\begin{theorem}
\label{thm:planar-convex-twins}
Let $K\subset(-1/2,1/2)^2$ be an origin-symmetric convex body, and let
$T_n(K)$ be the number of adjacent twin pairs in $G_K(n)$.  Then
\begin{equation}
\label{eq:convex-twin-poisson}
   T_n(K)\xrightarrow{d}
   \Poi\left(\frac{\area(K^\circ)}{16}\right).
\end{equation}
Consequently, $G_K(n)$ does not satisfy a first-order zero-one law.
In particular,
\[
   T_n([-r,r]^2)\xrightarrow{d}\Poi\left(\frac1{8r^2}\right)
\]
and
\[
   T_n(rB_2)\xrightarrow{d}\Poi\left(\frac{\pi}{16r^2}\right)
\]
for every $0<r<1/2$.
\end{theorem}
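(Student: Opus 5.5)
The plan is to compute the factorial moments of $T_n(K)$ and show that they converge to the powers of $\mu:=\area(K^\circ)/16$. The key reduction is this. Two vertices $i,j$ are adjacent twins exactly when $h:=X_j-X_i$ (taken in $[-1/2,1/2)^2$) lies in $K$ and no other point lies in the symmetric difference $(X_i+K)\symdiff(X_j+K)$. Put $f(h):=\area\bigl(K\symdiff(K+h)\bigr)$, computed on the torus. Conditioning on $X_i,X_j$, the remaining $n-2$ points are independent, so
\[
  \E T_n(K)=\binom n2\int_{K}\bigl(1-f(h)\bigr)^{n-2}\,dh .
\]

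\textbf{Step 1: the first moment.} I first establish the local behaviour of $f$. Let $J$ be rotation by $90^\circ$ and $h_K$ the support function. I claim
\[
  f(h)=4h_K(Jh)+o(|h|)\qquad (h\to0),
\]
uniformly in direction. To see this, slice $K$ by lines parallel to $h$. Each chord of length $\ell$ contributes $\min(\ell,|h|)$ to $\area(K\setminus(K+h))$. Integrating over the transversal direction gives $|h|$ times the width of the projection of $K$ onto $h^\perp$, up to $o(|h|)$; that width is $2h_K(Jh/|h|)$. The two halves of the symmetric difference are equal, which gives the factor $4$. No boundary regularity is needed beyond convexity.

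Away from the origin, $f(h)\ge c_\varepsilon>0$ for $|h|\ge\varepsilon$. This holds because $K\subset(-1/2,1/2)^2$ is a proper compact body, so $K+h\neq K$ on the torus for $h\ne0$, and $f$ is continuous. The region $|h|\ge\varepsilon$ therefore contributes $O(n^2e^{-c_\varepsilon n})\to0$.

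Near the origin I substitute $h=u/n$ and use the degree-one homogeneity of $g(h):=4h_K(Jh)$, obtaining
\[
  \E T_n\to\tfrac12\int_{\R^2}e^{-g(u)}\,du=\tfrac12\cdot 2!\,\area\{g\le1\}.
\]
Since $\{g\le1\}=J^{-1}(K^\circ/4)$, this area is $\area(K^\circ)/16$. As a check, for the square $K=[-r,r]^2$, $K^\circ$ is the $\ell^1$-ball of radius $1/r$, which has area $2/r^2$, giving $\mu=1/(8r^2)$. For the disk $rB_2$, $\area(K^\circ)=\pi/r^2$, giving $\mu=\pi/(16r^2)$.

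\textbf{Step 2: higher factorial moments.} The $m$-th factorial moment is a sum over ordered $m$-tuples of distinct unordered pairs, which I split into two cases.

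\emph{Vertex-disjoint pairs.} The $2m$ points must avoid the union $U$ of the $m$ symmetric-difference regions. On the event that the pair locations are pairwise at distance $\ge 2\operatorname{diam}K$, I need two facts. First, $\area U=\sum_k f(h_k)$ minus overlaps of total size $O\bigl(\sum_k|h_k|\bigr)^2$, and these overlaps are negligible since $|h_k|=O(1/n)$. Second, each pair's own points can sit inside another pair's region only with probability $O(1/n)$. The integral then factorizes in the limit to $\mu^m$. Configurations in which pair locations are within distance $O(1)$ of each other cost an extra factor $O(1/n)$... I will actually bound them using $f(h_k)\gtrsim|h_k|$, together with the observation that the union of the regions still has area at least $c\max_k f(h_k)$, so they tend to zero.

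\emph{Pairs sharing a vertex.} Such a configuration forms a twin class of size $\ge3$. Three mutual twins, for example, means all three points lie within $O(1/n)$ of each other, with empty symmetric difference regions. Its expected number is $O\bigl(n^3\cdot n^{-4}\bigr)=O(1/n)$, and similarly for larger clusters.

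Assembling the two cases, $\E(T_n)_m\to\mu^m$, which gives the Poisson limit \eqref{eq:convex-twin-poisson}. Alternatively, a Stein--Chen dependency-graph bound would do the same job with the same estimates.

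\textbf{Step 3: failure of the zero-one law.} The property of having an adjacent twin pair is first-order:
\[
  \exists x\,\exists y\,\Bigl(x\ne y\wedge xEy\wedge\forall z\bigl(z\ne x\wedge z\ne y\to(zEx\leftrightarrow zEy)\bigr)\Bigr).
\]
By the Poisson limit its probability tends to $1-e^{-\mu}\in(0,1)$, so no zero-one law holds.

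The main obstacle is the uniform control in Step 2 of nearby pairs whose symmetric-difference regions interact. For these I plan to use the crude lower bound $\area U\ge c\,|h_1|$ together with the fact that the pair locations are confined to an $O(1)$-neighbourhood. This leaves a net factor $n^{-1}$ after integrating out the $h_k$ at scale $1/n$.
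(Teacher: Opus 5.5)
Your first-moment computation, your treatment of pairs sharing a vertex and the first-order sentence all essentially match the paper. For the overlapping case, tuples mixing a twin class with further pairs need the joint spanning-forest bound $O(n^{2c-v})$, but that is the same estimate. Your uniform-in-direction expansion of $f$ is also correct; concavity of the chord length even gives an $O(|h|^2)$ error.

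The gap is in the vertex-disjoint case of Step~2. You split configurations by the distance between pair locations. ``Far'' pairs are those at distance at least $2\operatorname{diam}K$, whose regions you treat as essentially non-interacting. ``Near'' pairs are those you claim cost an extra factor $O(1/n)$. That picture fits a shrinking-radius regime, not this one, because $K$ is a fixed body on the unit torus.

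\begin{itemize}
\item The far event has probability bounded away from one. It is empty once $2\operatorname{diam}K>\sqrt2/2$, for instance for $K=[-r,r]^2$ with $r>1/8$. So the near configurations carry essentially all of the mass.
\item For near configurations, the bound $\area U\ge c\max_k|h_k|$, integrated over the $h_k$ at scale $1/n$, gives $O(n^{-2m})$. Multiplied by the $\asymp n^{2m}$ tuples, this is only an $O(1)$ contribution. No extra $1/n$ appears, because the pair locations range over a set of fixed positive measure.
\item Your pointwise claim that the overlaps have total size $O\bigl((\sum_k|h_k|)^2\bigr)$ is false. Take $K=[-r,r]^2$ and two pair locations differing by a vertical vector. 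The vertical boundary strips of width $\asymp1/n$ are then collinear and overlap in area $\asymp1/n$. Tangencies of curved boundaries likewise give overlaps much larger than $n^{-2}$.
\item A Stein--Chen dependency-graph argument does not rescue this, since each twin event depends on all $n$ points.
\end{itemize}

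The missing idea is to control the overlaps only \emph{on average over the pair locations} $Y=(Y_1,\dots,Y_m)$. The paper uses the translation-averaging identity
\[
   \int_{\T^2\times\T^2}\area\bigl((y+A)\cap(z+B)\bigr)\,dy\,dz=\area(A)\area(B).
\]
Fix rescaled displacements $u_a=nh_a$. The identity shows that the total pairwise overlap of the strips $Y_a+D_K(u_a/n)$ has $Y$-average at most $\sum_{a<b}\area(D_K(u_a/n))\area(D_K(u_b/n))=O(n^{-2})$. This holds even though the overlap can be of order $1/n$ on a set of $Y$ of measure $O(1/n)$.

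Hence $n\,\area(U)\to4\sum_a h_K(Ju_a)$ in $L^1(dY)$. Since the exponential is Lipschitz on bounded sets, the $Y$-averaged avoidance probability $(1-\area U)^{n-2m}$ converges to $\exp\{-4\sum_a h_K(Ju_a)\}$. Then combine three things: this limit, your $O(1/n)$ estimate for a pair's own points lying in another pair's region, and dominated convergence in the $u_a$ using $\area U\ge c\max_a\|u_a\|/n$. Together they give $n^{2m}P_{m,n}\to(\area(K^\circ)/8)^m$, where $P_{m,n}$ is the probability that $m$ fixed disjoint pairs are all adjacent twins. That is exactly what your factorization step needs.
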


For the $L^\infty$ model in arbitrary dimension we have the following
additional calculation.

\begin{theorem}
\label{thm:fixed-twins}
Fix $D\ge1$ and $0<r<1/2$.  Let $T_{n,D}(r)$ be the number of adjacent twin
pairs in $G_D(n;r)$.  Then
\begin{equation}
\label{eq:expected-twins}
   \E T_{n,D}(r)
   \sim 2^{D-1-D^2}r^{-D(D-1)}n^{2-D}.
\end{equation}
In particular,
\begin{equation}
\label{eq:poisson-twins}
   T_{n,2}(r)\xrightarrow{d}\Poi\left(\frac1{8r^2}\right).
\end{equation}
Consequently, $G_2(n;r)$ does not satisfy a first-order zero-one law for any
fixed $0<r<1/2$.  For every fixed $D\ge3$,
$T_{n,D}(r)\to0$ in probability.
\end{theorem}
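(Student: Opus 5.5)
Write $K=[-r,r]^D$ and $A\symdiff B$ for the symmetric difference of neighborhood boxes. For an adjacent pair $\{i,j\}$ with displacement $v=X_j-X_i$, $\|v\|_\infty\le r$, the two are adjacent twins exactly when no other point lies in $(X_i+K)\symdiff(X_j+K)$. Conditioning on $X_i,X_j$, this has probability $(1-\lambda(v))^{n-2}$, where
\[
\lambda(v)=\vol\bigl(K\symdiff(K+v)\bigr)=2\bigl((2r)^D-\textstyle\prod_{m=1}^D(2r-|v_m|)\bigr).
\]
Hence
\[
\E T_{n,D}(r)=\binom n2\int_{\|v\|_\infty\le r}(1-\lambda(v))^{n-2}\,dv .
\]
Since $r<1/2$, translates do not wrap around and this formula is exact. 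I would then carry out a Laplace-type asymptotic. Near $v=0$ we have $\lambda(v)=2(2r)^{D-1}\|v\|_1+O(\|v\|^2)$. Away from a neighborhood of $0$, $\lambda$ is bounded below, so that part contributes exponentially little. Substituting $v=u/n$ gives
\[
\binom n2 n^{-D}\int_{\R^D}e^{-2(2r)^{D-1}\|u\|_1}\,du\,(1+o(1)),
\]
with the local error handled by dominated convergence, using $\lambda(v)\ge c\|v\|_1$ on the box. The integral is $\prod_m\int_\R e^{-a|u_m|}\,du_m=(2/a)^D$ with $a=2(2r)^{D-1}$. Thus $(2/a)^D=(2r)^{-D(D-1)}$, and
\[
\E T_{n,D}(r)\sim \tfrac12 n^{2-D}(2r)^{-D(D-1)}=2^{-1-D(D-1)}r^{-D(D-1)}n^{2-D}.
\]
I should reconcile this with the stated constant $2^{D-1-D^2}=2^{-1-D(D-1)}\cdot 2^{0}$. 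Indeed $-1-D^2+D=D-1-D^2$, so they agree. For $D\ge3$, Markov's inequality then gives $T_{n,D}\to0$ in probability.

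For $D=2$, the Poisson limit \eqref{eq:poisson-twins} is the special case $K=[-r,r]^2$ of Theorem~\ref{thm:planar-convex-twins}, whose mean $\area(K^\circ)/16=(2/r^2)/16=1/(8r^2)$ matches. I would simply invoke that theorem. If a self-contained argument is preferred, the plan is the method of factorial moments. I would show that $\E(T_n)_k\to\mu^k$ with $\mu=1/(8r^2)$.

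The $k$-th factorial moment splits according to whether the $k$ twin pairs are vertex-disjoint and spatially far apart, or overlapping or close.

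1. \emph{Far-apart, disjoint pairs.} The emptiness regions have total volume $\sum\lambda(v_\ell)$ up to intersections. Each $\lambda(v_\ell)$ is $O(1/n)$ in the relevant range, so the intersections cost $O(1/n^2)$ and the events factor asymptotically, giving $\mu^k$.

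2. \emph{Overlapping pairs.} Three mutual twins $i,j,l$ require two independent $O(1/n)$-scale displacements. The combinatorial factor is $n^3$ against $n^{-4}$ of volume, which is negligible.

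3. \emph{Nearby but disjoint pairs.} I expect this to be the main obstacle. Two twin pairs whose boxes overlap substantially have emptiness regions that may share area, so the naive factorization fails. However, each pair's displacement is still confined to a volume of order $n^{-2}$, and the requirement that the two centers be within distance $2r$ does not reduce the count. The union of the two symmetric differences still has area at least $c(\|v_1\|_1+\|v_2\|_1)$, since each difference contains thin strips not covered by the other except on a set of proportionally small area. Dominated convergence therefore still yields the product limit.

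The failure of the zero-one law follows because the sentence asserting that some adjacent twin pair exists has limiting probability $1-e^{-\mu}\in(0,1)$.
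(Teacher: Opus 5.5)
Your argument is essentially the paper's: the exact integral formula for $\E T_{n,D}(r)$, the linear expansion $\lambda(v)=2^Dr^{D-1}\|v\|_1+O(\|v\|_1^2)$, the substitution $u=nv$ with dominated convergence, Markov's inequality for $D\ge3$, and an appeal to Theorem~\ref{thm:planar-convex-twins} for $D=2$. One correction is needed. Your claim that $r<1/2$ rules out wrap-around on all of $\{\|v\|_\infty\le r\}$ is false when $r>1/3$: two arcs of length $2r$ at torus distance $d\in(1-2r,r]$ also overlap on the far side, so their intersection has length $4r-1$ rather than $2r-d$. Hence the product formula for $\lambda$ holds only near $0$, and the lower bound away from the origin must come, as in Lemma~\ref{lem:symdiff}, from continuity, $\lambda(v)>0$ for $v\neq0$, and compactness.
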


The planar Poisson limit is therefore universal over origin-symmetric convex
connection bodies below the injectivity scale.  For the $L^\infty$ model,
adjacent twins settle the fixed-radius question in dimension two but do not
provide an obstruction in dimensions $D\ge3$.  We next turn to shrinking
radii.  For integers $j,N\ge0$, let $M(j,N)$ be the graph consisting of $j$
disjoint edges and $N$ isolated vertices.

\begin{theorem}
\label{thm:first-edge-convergence}
Fix $D\ge1$, and suppose that $n^2r_n^D\to a\in[0,\infty)$.  Put
$
   \lambda=2^{D-1}a.
$
For every first-order graph sentence $\varphi$, the limit
\[
   \lim_{n\to\infty}\Prob(G_D(n;r_n)\models\varphi)
\]
exists.  More precisely, for each $j\ge0$, let
$b_\varphi(j)\in\{0,1\}$ be the eventual truth value of $\varphi$ on
$M(j,N)$ as $N\to\infty$.  Then
\begin{equation}
\label{eq:first-edge-limit-law}
   \lim_{n\to\infty}\Prob(G_D(n;r_n)\models\varphi)
   =\sum_{j=0}^{\infty}e^{-\lambda}\frac{\lambda^j}{j!}\,b_\varphi(j).
\end{equation}
In particular, if $a=0$, then $G_D(n;r_n)$ is empty with high probability and
the zero-one law holds.  If $a\in(0,\infty)$, then the zero-one law fails.
\end{theorem}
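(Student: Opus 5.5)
Our plan is to show that, with high probability, $G_D(n;r_n)$ is isomorphic to $M(E_n,n-2E_n)$, where $E_n$ is the number of edges, that $E_n\xrightarrow{d}\Poi(\lambda)$, and that for each fixed $j$ the truth value of $\varphi$ on $M(j,N)$ stabilizes as $N\to\infty$.

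\textbf{Edge count.} For $i<j$, the probability that $d_\infty(X_i,X_j)\le r_n$ is exactly $(2r_n)^D$ once $r_n<1/2$. Hence
\[
\E E_n=\binom n2(2r_n)^D\sim 2^{D-1}n^2r_n^D\to\lambda .
\]
We then expect the graph to have no vertex of degree at least $2$. A path on three vertices appears with expected count $O(n^3r_n^{2D})=O\bigl((n^2r_n^D)^2/n\bigr)=o(1)$. This is a union bound over ordered triples with a centre vertex: the two other points must each lie in the $r_n$-box around the centre, and these events are independent given the centre. So, with high probability, the graph is a matching plus isolated vertices.

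For the Poisson limit we use factorial moments. The $m$-th factorial moment of $E_n$ counts ordered $m$-tuples of distinct edges. Tuples of vertex-disjoint edges involve independent pairs, so they contribute $(n)_{2m}2^{-m}(2r_n)^{Dm}\to\lambda^m$. Tuples in which some edges share vertices force a connected structure containing a path on three vertices. Their contribution is bounded by a constant times $n^{v}r_n^{D(v-c)}$, where $v$ is the number of vertices used and $c$ is the number of components, using a spanning forest and the independence of the tree edges given the roots. Since sharing a vertex means $v<2e$, this is $o(1)$. The method of moments then gives $E_n\xrightarrow{d}\Poi(\lambda)$. When $a=0$ this says $E_n=0$ with high probability, so the graph is empty and the limit is $b_\varphi(0)\in\{0,1\}$, which is the zero-one case.

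\textbf{Logical step.} Let $q$ be the quantifier rank of $\varphi$. By a standard Ehrenfeucht--Fra\"iss\'e game, $M(j,N)\equiv_q M(j,N')$ whenever $N,N'\ge q$. Duplicator matches isolated vertices with isolated vertices and edge-vertices with edge-vertices, preserving partners and equality, and there are enough isolated vertices on both sides. Thus $b_\varphi(j)$ is well defined, and $\varphi$ has value $b_\varphi(j)$ on $M(j,n-2j)$ for all $n\ge 2j+q$.

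\textbf{Assembling the limit.} We have
\[
\Prob(\varphi)=\sum_j \Prob\bigl(E_n=j,\ \text{matching}\bigr)\,b_\varphi(j)+o(1).
\]
Truncating at $j\le J$ and using tightness of the Poisson limit gives \eqref{eq:first-edge-limit-law}.

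\textbf{Failure of zero-one law.} Take $\varphi=\exists x\exists y\,(x\sim y)$. Then $b_\varphi(j)=\1[j\ge1]$, so the limit is $1-e^{-\lambda}\in(0,1)$ when $a>0$.

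The main obstacle is the bookkeeping for overlapping edge configurations in the factorial moments. The spanning-forest bound handles it cleanly because on the torus the geometric edge events along a tree are independent given the positions of the roots.
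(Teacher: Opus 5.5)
Your proposal is correct and follows the paper's proof essentially step for step: factorial moments with a spanning-forest bound for overlapping tuples, a first-moment bound on pairs of edges sharing a vertex, the Ehrenfeucht--Fra\"iss\'e argument giving $M(j,N)\equiv_q M(j,N')$, and truncation at $J$ using tightness. One small correction: the inequality that makes the overlapping contribution $O(n^{v}r_n^{D(v-c)})=O(n^{2c-v})$ vanish is $v>2c$, which holds because every component has at least two vertices and the one containing two overlapping edges has at least three (equivalently, $v<2(v-c)$ counting forest edges); the inequality $v<2m$ for the $m$ selected edges is not enough on its own.
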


Call a finite connected graph $H$ strictly $D$-geometric if either $H=K_1$
or, after writing $V(H)=\{1,\ldots,s\}$, there are points
$z_1,\ldots,z_s\in\R^D$ such that for every $i<j$,
\[
   ij\in E(H) \Rightarrow \|z_i-z_j\|_\infty<1,
   \qquad
   ij\notin E(H) \Rightarrow \|z_i-z_j\|_\infty>1.
\]

For $s\ge1$, let $\mathcal H_D^{(s)}$ be a fixed set of representatives
of the isomorphism classes of strictly $D$-geometric connected graphs
on exactly $s$ vertices, and put
\[
   \mathcal H_{D,k}
   =\bigcup_{1\le s\le k}\mathcal H_D^{(s)}.
\]


For a finite connected graph $H$ on $s$ vertices, let
$G_\infty(0,z_2,\ldots,z_s)$ be the graph on the displayed points in which
two points are adjacent exactly when their $L^\infty$-distance is at most
one, and define
\begin{equation}
\label{eq:general-geometric-volume}
   \gamma_D(H)
   :=
   \int_{(\R^D)^{s-1}}
   \1\{G_\infty(0,z_2,\ldots,z_s)\cong H\}
   \,dz_2\cdots dz_s.
\end{equation}
For $H=K_1$, this is the integral over the zero-dimensional space, so
$\gamma_D(K_1)=1$.  Let $C_{n,H}$ denote the number of connected components
of $G_D(n;r_n)$ isomorphic to $H$.

\begin{theorem}
\label{thm:critical-component-window}
Fix $D\ge1$ and $k\ge2$, and suppose that
\[
   n^k r_n^{D(k-1)}\longrightarrow a\in(0,\infty).
\]
For $H\in\mathcal H_D^{(k)}$, put
\[
   \lambda_H=\frac{a\gamma_D(H)}{k!}.
\]
Then the following hold.
\begin{enumerate}[label=\textup{(\roman*)}]
\item The vector
$
   \bigl(C_{n,H}:H\in\mathcal H_D^{(k)}\bigr)
$
converges jointly in distribution to a vector
$\bigl(Z_H:H\in\mathcal H_D^{(k)}\bigr)$ of independent random variables
with
\[
   Z_H\sim\Poi(\lambda_H).
\]
\item For every $H\in\mathcal H_{D,k-1}$,
$
   C_{n,H}\longrightarrow\infty
   \qquad\text{in probability}.
$4
\item With high probability every connected component has at most $k$
vertices and is isomorphic to a member of $\mathcal H_{D,k}$.
\end{enumerate}

For a vector
$\mathbf m=(m_H)_{H\in\mathcal H_D^{(k)}}$ of nonnegative integers and
$N\ge0$, let $F_{\mathbf m,N}$ be the finite graph consisting of $N$ copies
of every graph in $\mathcal H_{D,k-1}$ and exactly $m_H$ copies of $H$ for
each $H\in\mathcal H_D^{(k)}$.  For every first-order graph sentence
$\varphi$, the truth value of $F_{\mathbf m,N}\models\varphi$ is constant
for all sufficiently large $N$ by
Lemma~\ref{lem:mixed-finite-profile-logic}; denote this eventual truth value
by $b_\varphi(\mathbf m)$.  Then
\begin{equation}
\label{eq:critical-component-limit-law}
\begin{aligned}
&\lim_{n\to\infty}\Prob(G_D(n;r_n)\models\varphi)\\
&\qquad=
\sum_{\mathbf m}
   b_\varphi(\mathbf m)
   \prod_{H\in\mathcal H_D^{(k)}}
      e^{-\lambda_H}\frac{\lambda_H^{m_H}}{m_H!}.
\end{aligned}
\end{equation}
where the sum ranges over all vectors
$\mathbf m=(m_H)_{H\in\mathcal H_D^{(k)}}$ of nonnegative integers.  In
particular, $G_D(n;r_n)$ satisfies a first-order convergence law but not a
zero-one law.
\end{theorem}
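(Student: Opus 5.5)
The plan is to prove (i)--(iii) by the method of factorial moments, using the scaling $x\mapsto x/r_n$, and then to derive the limit law from them together with Lemma~\ref{lem:mixed-finite-profile-logic}.

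\textbf{Step 1: first moments.} Fix a connected $H$ on $s$ vertices. The expected number of $s$-sets that induce a copy of $H$ and form a whole component is
\[
\binom ns\cdot s!\,|\mathrm{Aut}(H)|^{-1}\cdot(\text{probability}).
\]
To evaluate the probability, place $X_1$ anywhere and substitute $X_i=X_1+r_nz_i$. Since $r_n\to0$, the torus metric agrees with the $L^\infty$ metric on $\R^D$ at this scale. The probability that the induced graph on these $s$ labelled points is a given labelled copy of $H$ is then $r_n^{D(s-1)}\gamma_D(H)/(\text{number of labelled copies})$. The set of configurations with some distance exactly $r_n$ is null, and configurations lying strictly inside the realisation region have positive measure exactly when $H$ is strictly $D$-geometric; this is what makes $\gamma_D(H)>0$ precisely for $H\in\mathcal H_D^{(s)}$. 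The component requirement also asks that the remaining $n-s$ points avoid a set of volume $O(r_n^D)$. Under the hypothesis we have $r_n^D\asymp n^{-k/(k-1)}$, so $nr_n^D\to0$ and this avoidance probability tends to $1$. Hence
\[
\E C_{n,H}\sim \frac{n^s r_n^{D(s-1)}\gamma_D(H)}{s!}.
\]
Because $n^s r_n^{D(s-1)}=(n^kr_n^{D(k-1)})^{(s-1)/(k-1)}n^{(k-s)/(k-1)}$, this tends to $\lambda_H$ when $s=k$ and to $\infty$ when $s<k$.

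\textbf{Step 2: part (iii).} I will use a first-moment bound. Any component with more than $k$ vertices contains a connected induced subgraph on exactly $k+1$ vertices, so it contains a spanning tree on $k+1$ vertices. The expected number of such trees is $O(n^{k+1}r_n^{D k})$, which equals $a^{k/(k-1)}n^{1-k/(k-1)}\to0$. A component with at most $k$ vertices that is not strictly geometric would have to realise some distance exactly equal to $r_n$, which happens with probability $0$. Therefore every component lies in $\mathcal H_{D,k}$ with high probability.

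\textbf{Step 3: part (i).} For joint Poisson convergence I will show that the joint factorial moments $\E\prod_H (C_{n,H})_{m_H}$ converge to $\prod_H\lambda_H^{m_H}$. These moments count ordered tuples of distinct components. Any two tuple members that are distinct components are automatically vertex-disjoint and at distance greater than $r_n$. The count therefore factorises as in Step 1, up to a $1+o(1)$ correction coming from the excluded volume $O(r_n^D)$, which is negligible against $n^{-1}$.

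\textbf{Step 4: part (ii).} For $s<k$ I will use the second moment. The same factorisation gives $\E (C_{n,H})_2\sim(\E C_{n,H})^2$, so $\operatorname{Var}C_{n,H}=o((\E C_{n,H})^2)$. Chebyshev's inequality then gives $C_{n,H}\to\infty$ in probability. The only delicate bookkeeping in Steps 3--4 is handling the $O(1)$ vertex overlap and the near-pair exclusions, which I expect to be routine.

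\textbf{Step 5: the limit law.} By (ii), (iii) and Lemma~\ref{lem:mixed-finite-profile-logic}, for each $\varphi$ there is $N_0$ such that the following holds. Whenever every type in $\mathcal H_{D,k-1}$ occurs at least $N_0$ times, no other components appear, and the counts of the $k$-vertex types are $\mathbf m$, the graph $G_D(n;r_n)$ agrees on $\varphi$ with $F_{\mathbf m,N}$ for large $N$. This uses the component-wise Ehrenfeucht--Fra\"iss\'e argument behind the lemma: any two graphs whose component multiplicities agree up to threshold $N_0$ satisfy the same sentences of bounded quantifier rank. Hence $\Prob(G\models\varphi)=\sum_{\mathbf m}b_\varphi(\mathbf m)\Prob(C_{n,\cdot}=\mathbf m)+o(1)$. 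By (i) and tightness of Poisson variables, we may truncate to $|\mathbf m|\le M$ at small cost and pass to the limit, which gives \eqref{eq:critical-component-limit-law}.

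Failure of the zero-one law follows by taking $\varphi$ to say that some component isomorphic to a fixed $H\in\mathcal H_D^{(k)}$ exists. Such a component is first-order definable, since it is a $k$-vertex induced copy of $H$ with no outside neighbours. Its limiting probability is $1-e^{-\lambda_H}\in(0,1)$. This is nondegenerate because $\mathcal H_D^{(k)}$ is nonempty: a path on $k$ vertices placed collinearly with spacing $0.9$ is strictly geometric.

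The main obstacle is Step 3, specifically controlling the correlation corrections uniformly across the mixture of types. I expect the smallness $nr_n^D\to0$ to settle it.
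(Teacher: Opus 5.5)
Your proposal is correct and follows essentially the same route as the paper. It uses mixed factorial moments with the $r_n$-scaling and the $nr_n^D\to0$ avoidance estimate for (i), a second-moment/Chebyshev argument for (ii), and a spanning-tree first-moment bound plus the null-boundary lifting argument for (iii). It then combines Lemma~\ref{lem:mixed-finite-profile-logic} with Poisson tightness to get the limit law. The only cosmetic difference is in witnessing the failure of the zero-one law: you use a collinear path component, which needs only $\gamma_D>0$, whereas the paper uses the clique $K_k$ together with the explicit value $\gamma_D(K_k)=k^D$.
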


The first-edge theorem is the case $k=2$ of this result when $a>0$, written
separately because it also includes the degenerate case $a=0$ and has a
particularly simple explicit formula.  The clique coordinate gives the
following earlier obstruction.

\begin{corollary}
\label{thm:clique-components}
Fix $D\ge1$ and $k\ge2$, and let $C_{n,k}:=C_{n,K_k}$.  If
\[
   n^k r_n^{D(k-1)}\longrightarrow a\in[0,\infty),
\]
then
\begin{equation}
\label{eq:clique-poisson}
   C_{n,k}\xrightarrow{d}
   \Poi\left(\frac{k^D}{k!}a\right).
\end{equation}
Consequently, if $a\in(0,\infty)$, the first-order sentence asserting the
existence of a component isomorphic to $K_k$ has limiting probability
\[
   1-\exp\left\{-\frac{k^D}{k!}a\right\},
\]
which lies strictly between zero and one.
\end{corollary}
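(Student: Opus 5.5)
The corollary follows from Theorem~\ref{thm:critical-component-window}(i) when $a>0$, and from a first-moment bound when $a=0$. In both cases the clique volume has to be computed explicitly.

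\emph{The clique volume.} I will show that $\gamma_D(K_k)=k^D$. The points $0,z_2,\ldots,z_k$ form a clique in the $L^\infty$ unit-distance graph exactly when, in every coordinate $t$, the numbers $0,z_2^{(t)},\ldots,z_k^{(t)}$ have range at most $1$. Indeed, pairwise $L^\infty$-distance at most $1$ holds iff each coordinate has diameter at most $1$. The constraint therefore factorizes over coordinates, and
\[
\gamma_D(K_k)=\Bigl(\mathrm{vol}_{k-1}\bigl\{(y_2,\ldots,y_k)\in\R^{k-1}:\max(0,y_i)-\min(0,y_i)\le1\bigr\}\Bigr)^D .
\]
For the one-dimensional volume, condition on $m=\min(0,y_2,\ldots,y_k)\in[-1,0]$. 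All $k$ points must lie in $[m,m+1]$. Equivalently, the set is a union over $u\in[-1,0]$ of the cubes $[u,u+1]^{k-1}$. The measure of the set of $(y_2,\ldots,y_k)$ with range of $\{0,y_i\}$ at most $1$ equals $k$. One way to see this is the standard fact that $k$ i.i.d.\ points have range density $k(k-1)\rho^{k-2}(1-\rho)$. Alternatively, compute directly: the volume of $\{y\in\R^{k-1}:\text{range}(0,y)\le1\}$ equals $\int_{-1}^{0}(\text{density of the minimum at } m)\,dm$. This equals $1$ (the case where $0$ is the minimum) plus $(k-1)\int_{-1}^0 (1+m)^{k-2}\,dm=1$ for each choice of the index achieving the minimum, giving $1+(k-1)=k$. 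So $\gamma_D(K_k)=k^D$.

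Since $K_k$ is trivially strictly $D$-geometric (put all $z_i$ at the origin), $K_k\in\mathcal H_D^{(k)}$. For $a\in(0,\infty)$, part (i) of Theorem~\ref{thm:critical-component-window} gives $C_{n,k}\to\Poi(a k^D/k!)$ as the $K_k$ marginal of the joint limit. The boundary cases where some distance equals exactly $1$ have measure zero, so the choice of $\le$ versus $<$ does not affect $\gamma_D$.

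\emph{The case $a=0$.} The target is $\Poi(0)=\delta_0$, so it suffices to show $\E C_{n,k}\to0$. A component isomorphic to $K_k$ requires $k$ points forming a clique. The expected number of $k$-cliques is
\[
\binom nk\,(2r_n)^{D(k-1)}\cdot\frac{\gamma_D(K_k)}{2^{D(k-1)}}\sim\frac{n^k r_n^{D(k-1)}k^D}{k!}\to0 .
\]
Here I use the torus change of variables, valid once $r_n<1/4$, say. If $r_n$ does not tend to $0$ this bound fails, but then $n^kr_n^{D(k-1)}\to\infty$, contradicting $a=0$. So $r_n\to0$, and Markov's inequality finishes the case.

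\emph{The probability statement.} The sentence ``there is a component isomorphic to $K_k$'' is first order: it says there exist $k$ distinct pairwise adjacent vertices with no further neighbours. Its probability is $\Prob(C_{n,k}\ge1)\to1-e^{-k^Da/k!}$. The main obstacle is only the one-dimensional volume computation; everything else is inherited from Theorem~\ref{thm:critical-component-window}.
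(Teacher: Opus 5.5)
Your route is the paper's. For $a>0$ you take the $K_k$ coordinate of Theorem~\ref{thm:critical-component-window}(i) and evaluate $\gamma_D(K_k)=k^D$ by factoring over coordinates and splitting according to which of $0,y_2,\dots,y_k$ is the minimum; this is Lemma~\ref{lem:gamma-value}. For $a=0$ you use a first moment and Markov. Bounding $C_{n,k}$ by the number of $k$-cliques, instead of invoking the component asymptotic of Lemma~\ref{lem:general-component-probability}, is a harmless simplification. Only an upper bound is needed, and for cliques the chart argument requires just $r_n<1/4$.

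The one step that is wrong as written is the ``direct'' evaluation of the one-dimensional volume, which you yourself identify as the crux. Suppose the minimum is attained by $y_j=m\in[-1,0]$. Then the remaining $k-2$ variables range freely over $[m,m+1]$, so the slice has volume $1$, not $(1+m)^{k-2}$. Equivalently, the density of the minimum on $(-1,0)$ is the constant $k-1$.

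Your formula is also internally inconsistent. The quantity $(k-1)\int_{-1}^{0}(1+m)^{k-2}\,dm$ equals $1$ in total, because the factor $k-1$ already accounts for the choice of index. Taken literally, your display therefore gives $1+1=2$, which is false for $k\ge3$. You reach $k$ only by counting that $1$ a second time ``for each choice of the index.''

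The correct count is $1+\sum_{j=2}^{k}\int_{-1}^{0}1\,dm=k$. This is already implicit in your own sentence that all $k$ points lie in $[m,m+1]$.

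Your alternative via the range density $k(k-1)\rho^{k-2}(1-\rho)$ can also be made to work, but it needs two steps you do not supply:
\begin{itemize}
\item the translation identity $\Prob(\text{range}\le\rho)=\int_{\{\text{range}(0,y)\le\rho\}}\bigl(1-\text{range}(0,y)\bigr)\,dy$;
\item the scaling $V(\rho)=V(1)\rho^{k-1}$.
\end{itemize}
Differentiating and comparing then gives $V(1)=k$.

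With the slice volume corrected, the rest of your argument is complete.
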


\begin{theorem}
\label{thm:between-component-windows}
Fix $D\ge1$ and an integer $k\ge1$.  Suppose that
\[
   n^k r_n^{D(k-1)}\longrightarrow\infty,
   \qquad
   n^{k+1}r_n^{Dk}\longrightarrow0.
\]
Then $G_D(n;r_n)$ satisfies a first-order zero-one law.  More precisely, with
high probability every component has at most $k$ vertices, every component
is isomorphic to a member of $\mathcal H_{D,k}$, and the number of components
isomorphic to each $H\in\mathcal H_{D,k}$ tends to infinity in probability.
Consequently, $G_D(n;r_n)$ converges first-order to the deterministic theory
of the disjoint union containing countably infinitely many copies of every
graph in $\mathcal H_{D,k}$.
\end{theorem}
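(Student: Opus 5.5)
The argument is a first- and second-moment computation on component counts, followed by the component-wise Ehrenfeucht--Fra\"isse lemma (Lemma~\ref{lem:mixed-finite-profile-logic}, or its pure version with every $m_H$ replaced by $N$).

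\textbf{Step 1: large components vanish.} First I show that with high probability no component has more than $k$ vertices. Any component with at least $k+1$ vertices contains a connected induced subgraph on exactly $k+1$ vertices, which in turn contains a spanning tree. The expected number of $(k+1)$-sets spanning a tree in $G_D(n;r_n)$ is at most
\[
   C_k\, n^{k+1}(2r_n)^{Dk}\to0 .
\]
To see this, integrate the positions along the tree: each tree edge constrains one point to a box of volume $(2r_n)^D$. Markov's inequality then finishes this step.

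\textbf{Step 2: each small type occurs many times.} Fix $H\in\mathcal H_D^{(s)}$ with $s\le k$. I compute $\E C_{n,H}$. Choose $s$ indices, rescale the differences by $r_n$, and require the configuration to be isomorphic to $H$ and isolated from the other $n-s$ points. This gives
\[
   \E C_{n,H}\sim \frac{n^s r_n^{D(s-1)}\gamma_D(H)}{|\mathrm{Aut}|\text{-adjusted } s!}\,\bigl(1-O(r_n^D)\bigr)^{n-s}.
\]
Here $\gamma_D(H)>0$ by strict geometricity, since the strict inequalities give an open set of realizations. Moreover $nr_n^D\to0$, because $n^{k+1}r_n^{Dk}\to0$ forces $r_n^D=o(n^{-(k+1)/k})$. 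Hence the isolation factor tends to $1$. The exponent $n^sr_n^{D(s-1)}$ is $\ge n^kr_n^{D(k-1)}\cdot (nr_n^D)^{-(k-s)}\to\infty$, and for $s=1$ it is simply $n$. For the variance, pairs of copies must be vertex-disjoint, since they are distinct components. Their joint probability factorizes up to a factor $1+O(r_n^D)$ from the mutual non-adjacency and isolation constraints, so
\[
   \operatorname{Var}C_{n,H}=O(\E C_{n,H})+o\bigl((\E C_{n,H})^2\bigr).
\]
Chebyshev's inequality then gives $C_{n,H}\to\infty$ in probability. This is the same computation as in parts (i) and (ii) of Theorem~\ref{thm:critical-component-window}, which I would reuse. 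Every component that does occur is realized by actual points of the torus. Since $r_n\to0$, it is realized in $\R^D$, and I need it to be strictly geometric. The set of realizations with some distance exactly $r_n$ has measure zero, so with probability one all components lie in $\mathcal H_{D,k}$.

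\textbf{Step 3: logic.} On the high-probability event from Steps 1 and 2, $G_D(n;r_n)$ is a disjoint union of members of the finite family $\mathcal H_{D,k}$, each appearing at least $N_n$ times with $N_n\to\infty$ in probability. By the finite-profile lemma (the Ehrenfeucht--Fra\"isse pebble-game argument, which depends only on the counts truncated at the quantifier depth), a sentence of quantifier rank $q$ has the same truth value on all such graphs once every count is at least $q$. The same truth value holds in the countable union $\bigsqcup_{H\in\mathcal H_{D,k}}\aleph_0\cdot H$. This yields both the zero-one law and the identification of the limit theory.

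The main obstacle is the second-moment bound when $s<k$ and $\E C_{n,H}$ grows polynomially. There I have to verify that the dependence created by the isolation requirement contributes only relative error $O(nr_n^D)=o(1)$, uniformly over overlapping neighborhoods. I would first try to bound this by counting pairs whose $2r_n$-neighborhoods intersect, which contribute $O\bigl((\E C_{n,H})^2\, r_n^D\bigr)$.
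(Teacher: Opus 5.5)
Your proposal is correct and follows essentially the same route as the paper. Both arguments use a first-moment bound on connected $(k+1)$-sets, the first and second factorial moments of each $C_{n,H}$ with Chebyshev, the lifting argument (using that no distance equals $r_n$ exactly), and Lemma~\ref{lem:mixed-finite-profile-logic} with $\mathcal S=\mathcal H_{D,k}$ and $\mathcal T=\varnothing$.

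The step you flag as the main obstacle is automatic, as in Lemma~\ref{lem:general-component-probability}. Conditional on the internal configurations, the cross-non-adjacency and isolation constraints hold with probability $1+O(r_n^D+nr_n^D)=1+o(1)$ uniformly, because the union of the relevant $r_n$-balls has volume $O(r_n^D)$. So no counting of overlapping neighborhoods is needed. Also, with the paper's $\gamma_D$, which already counts all labellings, the normalization is exactly $s!$.
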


Our last main result is a fixed-radius thin-cell estimate.  The formula
$\Theta_q$ is defined in Section~\ref{sec:higher-arity}.

\begin{theorem}
\label{thm:higher-arity-thin-cells}
Let $D\ge3$ and $0<r<1/2$.  Put
$
   g=1-2r,
$ and $
   q=\left\lfloor\frac1g\right\rfloor+1.
$
There are constants $0<c_{D,r}<C_{D,r}<\infty$ such that, for all
sufficiently large $n$,
\[
   \frac{c_{D,r}}n
   \le
   \Prob\bigl(G_D(n;r)\models\Theta_q(1,\ldots,q)\bigr)
   \le
   \frac{C_{D,r}}n.
\]
\end{theorem}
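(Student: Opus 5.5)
The plan is to reduce the event $\{G_D(n;r)\models\Theta_q(1,\ldots,q)\}$ to a geometric event about the positions $X_1,\ldots,X_q$ together with a Poisson-type occupancy statement for the other $n-q$ points. I will condition on $(X_1,\ldots,X_q)$ and integrate. The relevant geometry is the following. The neighbourhood of a point in $G_D(n;r)$ is the box $[-r,r]^D$ around it. In each coordinate the complement of the neighbourhood is an arc of length $g=1-2r$. Since $qg>1$, the $q$ complementary arcs can cover a coordinate circle. The common neighbourhood (or the relevant Boolean cell) of $X_1,\ldots,X_q$ is therefore a product of arcs, and it can be empty, or \emph{thin} of width $w$ in one coordinate. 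I expect that $\Theta_q$, as defined in Section~\ref{sec:higher-arity}, says exactly that this cell is nonempty in the graph (it contains a vertex) while a definable neighbouring cell does not. The point of requiring $D\ge3$ is that the remaining coordinates give enough room to make the cell first-order detectable through common neighbours without forcing it to be empty.

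\textbf{Step 1 (deterministic reduction).} I will show that, for all but a null set of positions of $X_1,\ldots,X_q$, the truth of $\Theta_q(1,\ldots,q)$ is determined by two things: the cell $R=R(X_1,\ldots,X_q)\subset\T^D$, of volume $\asymp w$ where $w=w(X_1,\ldots,X_q)$ is its thin width, and a fixed positive-volume region $S$. The condition is then ``$R$ contains some $X_j$ with $j>q$, and $S$ is occupied in the way the formula demands.'' The second condition holds with probability $1-o(1)$ uniformly, since a fixed positive-volume region is hit by $\Theta(n)$ points.

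\textbf{Step 2 (occupancy).} Conditionally on $X_1,\ldots,X_q$, the probability that $R$ contains a point is
\[
1-(1-\vol R)^{n-q}\asymp\min(1,nw).
\]
Hence
\[
\Prob(\Theta_q)\asymp\E\min(1,nw)\,\1\{w>0\},
\]
up to the $1-o(1)$ factor from Step 1.

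\textbf{Step 3 (distribution of $w$).} This is the main obstacle. I need the law of $w(X_1,\ldots,X_q)$ near $0^+$ to have density bounded above, and bounded below by a positive constant on some interval $(0,\delta)$.

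For the lower bound I will exhibit an explicit open set of configurations. Place the $q$ points so that, in the first coordinate, their complementary arcs of length $g$ cover the circle except for a gap of width $w$. The gap width $w$ is then an affine function of one coordinate of one point, with slope $\pm1$, and the other coordinates range over an open set. This gives density at least $c>0$ for $w\in(0,\delta)$.

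For the upper bound, $w$ is piecewise affine in the coordinates with slopes $\pm1$. On each of finitely many combinatorial pieces (orderings of arc endpoints), a coarea argument bounds $\Prob(0<w<t)\le Ct$.

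Combining the two bounds gives
\[
\E\min(1,nw)\1\{w>0\}\asymp\int_0^{\delta}\min(1,nt)\,dt+O(1/n)\asymp 1/n.
\]
The upper bound also needs the large-$w$ configurations to contribute nothing. Those are exactly the configurations in which the second part of $\Theta_q$ fails deterministically, and I will verify this from the definition of the formula.
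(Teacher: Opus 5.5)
There is a genuine gap, and it begins with the content of $\Theta_q$, which you guessed rather than used. The formula says that the roots are distinct and have \emph{exactly} $M_D=2^{D-2}+1$ common neighbours, and that these are pairwise non-adjacent. There is no ``neighbouring cell'' and no fixed region $S$.

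Your occupancy step also cannot give the order $1/n$. If the conditional probability is $\asymp\min(1,nw)$ and $w$ has density bounded below on $(0,\delta)$, then $\E[\min(1,nw);\,w>0]\ge c(\delta-1/n)$, which is of order one. Your display $\int_0^\delta\min(1,nt)\,dt\asymp1/n$ is false: the integral equals $\delta-1/(2n)$.

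The $1/n$ comes from requiring a \emph{bounded} number of points in the common cell $K(\bar X)=\bigcap_a B_r(X_a)$. Conditionally on the roots, this count is $\operatorname{Bin}(n-q,V)$ with $V=\vol K(\bar X)$, so the conditional probability is at most $C(nV)^{M_D}e^{-cnV}$. Large cells are therefore suppressed exponentially, not deterministically as you suggest, and only $V\asymp1/n$ matters.

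Even with the correct occupancy factor, your Step 3 controls a single width $w$ and assumes $\vol R\asymp w$. In fact $K(\bar X)=J_1\times\cdots\times J_D$ and $V=\prod_j\ell_j$, where the $\ell_j=|J_j|$ are independent. Each $\ell_j$, restricted to $(0,\delta)$, has density bounded above and below. Hence $\Prob(0<V\le t)\asymp t\log^{D-1}(1/t)$, and ``exactly $M_D$ common neighbours'' by itself has probability of order $n^{-1}(\log n)^{D-1}$, not $1/n$.

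The missing idea is that the non-adjacency clause excludes cells that are thin in several coordinates:
\begin{itemize}
\item $M_D$ pairwise $r$-separated points in a product box force at least $D-1$ of the $J_j$ to need two sets of torus diameter at most $r$ to cover them; call this $\kappa_j=2$. Otherwise the box is covered by at most $2^{D-2}<M_D$ sets of $L^\infty$-diameter at most $r$.
\item For such a coordinate, $\Prob(0<\ell_j\le s,\ \kappa_j=2)=O(s^2)$. Here $J_j$ has at least two components, so two spacings between consecutive left endpoints of the complementary arcs must lie in $(g,g+s]$ simultaneously.
\item This gives $\E[\ell_j^{-1};\ \ell_j>0,\ \kappa_j=2]<\infty$. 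Conditioning on those $D-1$ coordinates yields the linear tail $\Prob(0<V\le t,\ \#\{j:\kappa_j=2\}\ge D-1)\le Ct$.
\item A dyadic decomposition of $V$ against $(nV)^{M_D}e^{-cnV}$ then gives the $O(1/n)$ upper bound.
\end{itemize}

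Your lower-bound geometry, with one thin coordinate whose gap $u$ is affine in a single variable, matches the paper's spacing chart $d_1=g+u$, $d_i<g$, with $u\asymp1/n$. It must additionally make the other $D-1$ intervals longer than $r+\eta$. Then $K(\bar X)$ contains $2^{D-1}\ge M_D$ pairwise $r$-separated subboxes, and you use a positive conditional probability of exactly one point in each of $M_D$ of them.
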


The theorem gives a one-tuple probability estimate.  It does not assert
asymptotic independence for different root tuples, and therefore does not by
itself yield an interpretation of arithmetic or a failure of the convergence
law in dimensions $D\ge3$.

\begin{center}
\small
\renewcommand{\arraystretch}{1.15}
\begin{tabular}{>{\centering\arraybackslash}p{0.31\textwidth}
                >{\centering\arraybackslash}p{0.28\textwidth}
                >{\centering\arraybackslash}p{0.28\textwidth}}
\toprule
Regime & Conclusion & Main reason \\
\midrule
$D=1$, $0<r<1/2$ & zero-one law & McColm's theorem \\
Planar convex $K$ at fixed scale & zero-one law fails & Poisson adjacent twins \\
$D\ge3$, $0<r<1/2$ & open here & adjacent twins vanish \\
$n^2r_n^D\to0$ & zero-one law & empty graph with high probability \\
$n^2r_n^D\to a\in(0,\infty)$ & convergence law, not zero-one & Poisson matching \\
$n^k r_n^{D(k-1)}\to a\in(0,\infty)$ & complete convergence law, not zero-one & joint Poisson component profile \\
$n^k r_n^{D(k-1)}\to\infty$, $n^{k+1}r_n^{Dk}\to0$ & zero-one law & saturated finite components \\
$D\ge3$, $0<r<1/2$ & definable thin cells of order $1/n$ & common-neighborhood geometry \\
$r_n\ge1/2$ eventually & zero-one law & complete graph \\
\bottomrule
\end{tabular}
\end{center}

The paper is organized as follows.  Section~\ref{sec:elementary-logic}
records the elementary logical facts used in the proofs.
Section~\ref{sec:first-edge-window} treats the first edge window,
Section~\ref{sec:component-thresholds} proves the complete convergence law at
all critical component windows, and Section~\ref{sec:between-component-windows}
treats the regimes between consecutive component thresholds.
Section~\ref{sec:adjacent-twins} proves the planar convex-body Poisson theorem
and the all-dimensional $L^\infty$ twin asymptotics.
Section~\ref{sec:higher-arity} proves the thin-cell estimate.

\subsection{Notation and conventions}

Throughout the paper $D\ge1$ is fixed.  For $t\in\R/\Z$, write
\[
   \|t\|_{\T}:=\min_{m\in\Z}|t-m|.
\]
For $x,y\in\T^D$, set
\[
   d_\infty(x,y):=\max_{1\le i\le D}\|x_i-y_i\|_{\T}.
\]
The closed $L^\infty$ ball of radius $r$ around $x$ is
\[
   B_r(x):=\{y\in\T^D:d_\infty(x,y)\le r\}.
\]
For $0\le r\le1/2$,
\begin{equation}
\label{eq:ball-volume}
   \vol(B_r(x))=(2r)^D.
\end{equation}

All graphs are finite, simple and undirected, and are regarded as structures
in the first-order language with equality and one binary relation symbol
$E$.  All asymptotics are taken as $n\to\infty$, with $D$ and the other
displayed parameters fixed.  We write ``with high probability'' for
probability tending to one and $\Poi(\lambda)$ for the Poisson distribution
with mean $\lambda$.  Volume means normalized Lebesgue measure on a torus or
ordinary Lebesgue measure in a Euclidean chart.

\section{Elementary logical facts}
\label{sec:elementary-logic}

We shall use the following elementary consequence of the
Ehrenfeucht--Fra\"isse game.  It says that first-order logic cannot distinguish
between sufficiently large empty graphs, sufficiently large complete graphs, or
matchings with a fixed number of edges and sufficiently many isolated vertices.

\begin{lemma}
\label{lem:empty-complete-matching}
For every first-order graph sentence $\varphi$, the following eventual truth
values exist.
\begin{enumerate}[label=\textup{(\roman*)}]
\item The truth value of $\varphi$ on the empty graph with $N$ vertices is
constant for all sufficiently large $N$.
\item The truth value of $\varphi$ on the complete graph with $N$ vertices is
constant for all sufficiently large $N$.
\item For each fixed $j\ge0$, the truth value of $\varphi$ on $M(j,N)$ is
constant for all sufficiently large $N$.
\end{enumerate}
\end{lemma}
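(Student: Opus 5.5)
The plan is to use the Ehrenfeucht--Fra\"iss\'e theorem. If $\varphi$ has quantifier rank $q$, and Duplicator wins the $q$-round game on two graphs $G$ and $G'$, then $G\models\varphi$ iff $G'\models\varphi$. So in each of the three cases it suffices to show the following: for all $N,N'\ge q$ (respectively $N,N'\ge 2q$ in case (iii)), Duplicator wins the $q$-round game between the corresponding graphs. Then the truth value of $\varphi$ is the same for all $N$ beyond that bound, which is the required eventual constancy.

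\textbf{Cases (i) and (ii).} Let the graphs have $N,N'\ge q$ vertices. Duplicator maintains a partial bijection between the chosen vertices, so equalities are preserved. If Spoiler repeats an already chosen vertex, Duplicator answers with its partner. If Spoiler picks a new vertex, Duplicator picks a new vertex on the other side. This is always possible, because at most $q-1$ vertices have been used there before the last round and there are at least $q$ vertices available. Any injective partial map is a partial isomorphism, since in the empty graph all pairs are non-adjacent and in the complete graph all pairs of distinct vertices are adjacent.

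\textbf{Case (iii).} Compare $M(j,N)$ and $M(j,N')$ with $N,N'\ge q$. Duplicator keeps the following invariant: the partial map between chosen vertices preserves equality and adjacency, and it sends isolated vertices to isolated vertices and matching vertices to matching vertices.

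\begin{itemize}
\item If Spoiler picks a vertex on an edge whose partner is already chosen, Duplicator picks the partner of the image.
\item If Spoiler picks a vertex on an edge with no chosen endpoint, Duplicator picks an endpoint of an edge with no chosen endpoint on the other side. Such an edge exists because the chosen matching vertices correspond edge-by-edge on the two sides, both sides have exactly $j$ edges, and so the numbers of untouched edges agree.
\item If Spoiler picks a new isolated vertex, Duplicator picks a new isolated vertex. Fewer than $q\le N'$ isolated vertices have been used, so one is available.
\end{itemize}

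Checking that adjacency is preserved is then immediate: two chosen vertices are adjacent iff they are the two endpoints of one edge, and the moves above match edges with edges.

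The only mildly delicate point is the bookkeeping in (iii): the two sides must have identical numbers of untouched edges, which holds precisely because the edge count $j$ is the same on both sides. This is why $j$ is fixed and only $N$ varies. Everything else is the standard use of the Ehrenfeucht--Fra\"iss\'e theorem.
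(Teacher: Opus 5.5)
Your proposal is correct and follows the paper's Ehrenfeucht--Fra\"iss\'e argument essentially verbatim. The only difference is cosmetic: in case (iii) you pair untouched edges on the fly, whereas the paper fixes a matching of the $j$ edge components and their isomorphisms in advance, which makes the availability of a corresponding edge automatic. Two small tidy-ups: the announced bound $N,N'\ge 2q$ for (iii) is unnecessary, since your own argument (like the paper's) only uses $N,N'\ge q$; and the repeated-vertex move, which you handle explicitly in (i)--(ii), should also be stated in (iii).
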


\begin{proof}
Let $q$ be the quantifier depth of $\varphi$.  We prove that the relevant
graphs are equivalent for all first-order sentences of quantifier depth at most
$q$ once $N$ is large enough.

First consider two empty graphs on $N$ and $N'$ vertices, with
$N,N'\ge q$.  Duplicator wins the $q$-round Ehrenfeucht--Fra\"isse game by
maintaining a partial bijection between the vertices already chosen in the two
graphs.  If Spoiler repeats a previously chosen vertex, Duplicator repeats the
corresponding vertex.  If Spoiler chooses a new vertex, Duplicator chooses a
new vertex in the other graph.  Since at most $q$ distinct vertices can be
chosen during the game, this is always possible.  The resulting partial map
preserves equality and, because both graphs are empty, also preserves
adjacency.  Hence all empty graphs with at least $q$ vertices satisfy the same
sentences of quantifier depth at most $q$.

The same argument applies to complete graphs.  In that case any partial
bijection preserves equality, and it also preserves adjacency because two
distinct vertices are adjacent in both graphs.

It remains to treat $M(j,N)$ for fixed $j$.  Let
$N,N'\ge q$.  We compare $M(j,N)$ and $M(j,N')$.  Match the $j$ two-vertex edge
components in the two graphs in advance, and fix an isomorphism between each
matched pair of edge components.  During the game, Duplicator maintains the
following invariant.  Vertices chosen in edge components are mapped according
to these fixed component isomorphisms, and isolated vertices chosen so far are
paired bijectively with isolated vertices in the other graph.

Suppose Spoiler plays in an edge component.  If that component has already been
entered during the play, Duplicator answers with the corresponding vertex in
the matched component.  If it has not yet been entered, Duplicator uses the
preassigned isomorphism from that edge component to its matched edge component.
Suppose instead that Spoiler plays an isolated vertex.  If this isolated vertex
has already been chosen, Duplicator repeats the previously paired isolated
vertex.  If it is new, Duplicator chooses a new isolated vertex in the other
graph and pairs the two.  Since there are at most $q$ rounds and
$N,N'\ge q$, there is always an unused isolated vertex available when one is
needed.

After each round, the chosen vertices in the two graphs are related by a
partial isomorphism: equality is preserved by construction, and adjacency is
preserved because the only edges in $M(j,N)$ and $M(j,N')$ lie inside the
matched two-vertex components.  Thus Duplicator wins the $q$-round game.

Therefore, for each fixed $j$, all graphs $M(j,N)$ with $N\ge q$ satisfy the
same first-order sentences of quantifier depth at most $q$.  Applying this to
$\varphi$ proves the three asserted eventual truth values.
\end{proof}

For later use we record a component-wise version of the same argument.  Let
$\mathcal S$ and $\mathcal T$ be disjoint finite sets of finite connected
graphs.  For a vector $\mathbf m=(m_H)_{H\in\mathcal T}$ of nonnegative
integers, let $\mathfrak C_q(\mathcal S,\mathcal T;\mathbf m)$ be the class of finite
graphs whose components are all isomorphic to members of
$\mathcal S\cup\mathcal T$, which have at least $q$ components of every type
in $\mathcal S$, and which have exactly $m_H$ components isomorphic to $H$
for every $H\in\mathcal T$.

\begin{lemma}
\label{lem:mixed-finite-profile-logic}
Let $\varphi$ be a first-order graph sentence of quantifier depth $q$.  For
every fixed $\mathbf m$, all graphs in
$\mathfrak C_q(\mathcal S,\mathcal T;\mathbf m)$ have the same truth value
for $\varphi$.
\end{lemma}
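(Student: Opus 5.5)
We will show that any two graphs $G,G'\in\mathfrak C_q(\mathcal S,\mathcal T;\mathbf m)$ satisfy the same sentences of quantifier depth at most $q$, by exhibiting a winning strategy for Duplicator in the $q$-round Ehrenfeucht--Fra\"isse game on $G$ and $G'$. This is the component-wise analogue of the matching case of Lemma~\ref{lem:empty-complete-matching}. It is simpler than the general Hanf/Gaifman machinery because each component is a fixed finite graph, and there are only two kinds of components: types in $\mathcal T$, whose multiplicities agree exactly, and types in $\mathcal S$, of which there are many.

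\textbf{Preliminary pairing.} Since $G$ and $G'$ have exactly $m_H$ components isomorphic to $H$ for each $H\in\mathcal T$, we first fix a bijection between these components. For each matched pair we also fix an isomorphism. The sets $\mathcal S$ and $\mathcal T$ are disjoint, so no component can be counted under both headings.

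\textbf{Duplicator's invariant.} After each round, Duplicator maintains a partial bijection $\pi$ between the components of $G$ and of $G'$ that have been entered so far. This map preserves isomorphism type, agrees with the preassigned pairing on $\mathcal T$-components, and comes with a fixed isomorphism $\sigma_C\colon C\to\pi(C)$ for each entered component $C$. Chosen vertices are always related by these $\sigma_C$.

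\textbf{Responding to Spoiler.} Suppose Spoiler plays a vertex $v$ in a component $C$ of $G$ (the case of $G'$ is symmetric).
\begin{itemize}
\item If $C$ has already been entered, Duplicator answers $\sigma_C(v)$.
\item If $C$ is a new $\mathcal T$-component, Duplicator uses the preassigned partner and isomorphism.
\item If $C$ is a new $\mathcal S$-component of type $H$, at most $q-1$ components have been entered in earlier rounds. Since $G'$ has at least $q$ components of type $H$, there is an unentered one. Duplicator picks it together with some isomorphism and answers with the image of $v$.
\end{itemize}

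\textbf{Checking the invariant.} The chosen tuples form a partial isomorphism. Equality is preserved because $\pi$ is injective on components and each $\sigma_C$ is injective. Adjacency between vertices in the same component is preserved by $\sigma_C$. Vertices in different components are nonadjacent in both graphs, since $\pi$ sends distinct components to distinct components. Hence Duplicator wins, and $G\equiv_q G'$, which gives the claim for $\varphi$.

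\textbf{Main obstacle.} The only delicate point is the counting in the $\mathcal S$ case: we must ensure that a fresh component of the right type is always available. The bound ``at least $q$ copies'' suffices exactly because at most $q-1$ components have been entered before the last round. A secondary check is that mapping a fresh $\mathcal S$-component to a fresh one never conflicts with the fixed $\mathcal T$ pairing, and this follows from the disjointness of $\mathcal S$ and $\mathcal T$.
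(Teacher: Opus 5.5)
Your proposal is correct and follows essentially the same route as the paper's proof. The paper also fixes matched isomorphisms on the $\mathcal T$-components in advance and sends each newly entered $\mathcal S$-component to an unused component of the same type, which exists by the ``at least $q$ copies'' hypothesis. It likewise concludes that the chosen vertices form a partial isomorphism because no edges run between components.
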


\begin{proof}
Let $G,G'\in\mathfrak C_q(\mathcal S,\mathcal T;\mathbf m)$.  For each
$H\in\mathcal T$, match the $m_H$ components of type $H$ in $G$ bijectively
with those in $G'$, and fix an isomorphism on each matched pair.  During the
$q$-round Ehrenfeucht--Fra\"isse game, whenever Spoiler first enters a
component of a type in $\mathcal S$, Duplicator chooses a previously unused
component of the same type in the other graph and fixes an isomorphism
between them.  Whenever Spoiler enters a component of a type in
$\mathcal T$, Duplicator uses its preassigned mate.  All later moves in an
entered component are answered through the fixed component isomorphism.

At most $q$ components can be entered during the game, and each graph has at
least $q$ components of every type in $\mathcal S$.  Thus a fresh component
of the required type is always available.  The chosen vertices form a
partial isomorphism after every round, since adjacency is preserved inside
matched components and no edges run between different components.  Hence
Duplicator wins the $q$-round game.
\end{proof}

\section{The first edge window}
\label{sec:first-edge-window}

Let
$
   Z_n=e(G_D(n;r_n)),
$ and $
   p_n=\Prob(d_\infty(X_1,X_2)\le r_n)=(2r_n)^D .
$

\begin{lemma}
\label{lem:edge-poisson}
If $n^2r_n^D\to a\in[0,\infty)$, then
$
   Z_n\xrightarrow{d}\Poi(2^{D-1}a).
$
Moreover, with high probability $G_D(n;r_n)$ is a matching together with
isolated vertices.
\end{lemma}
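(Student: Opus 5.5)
We prove the Poisson limit with the method of factorial moments, and obtain the matching statement from a first-moment bound on paths of length two.

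\emph{Mean.} Since $r_n\to0$, we have $p_n=(2r_n)^D$ for large $n$. Hence
\[
\E Z_n=\binom n2 p_n\sim \tfrac{n^2}{2}2^D r_n^D\to 2^{D-1}a=\lambda .
\]

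\emph{Factorial moments.} For fixed $m\ge1$, the $m$-th factorial moment $\E (Z_n)_m$ equals the sum, over ordered $m$-tuples of distinct vertex pairs, of the probability that all $m$ pairs are edges. We split the tuples into two classes.

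\emph{Disjoint pairs.} When the $m$ pairs are pairwise vertex-disjoint, the edge events are independent. These tuples therefore contribute
\[
(1+o(1))\,n^{2m}2^{-m}p_n^m\to\lambda^m .
\]

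\emph{Overlapping pairs.} Otherwise the union of the pairs spans a graph $F$ with $v$ vertices, $m$ edges and $c$ components, where $v<2m$. The number of such tuples is $O(n^v)$.

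We bound the probability that a fixed such $F$ is present. Choose a spanning forest of $F$ and discard the remaining edges. Conditioning vertex by vertex along the trees, each forest edge requires the new point to land in a ball of volume $p_n$. So $F$ is present with probability at most $p_n^{v-c}$.

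Each component has at least two vertices, and at least one component has three or more. Therefore $v-c\ge v/2+1/2$. This gives the bound
\[
n^v p_n^{v-c}=O\bigl(n^v n^{-2(v-c)}\bigr)\le O\bigl(n^{v-v-1}\bigr)=O(n^{-1})\to0,
\]
where we used $p_n=\Theta(n^{-2})$ when $a>0$. When $a=0$, already $\E Z_n\to0$, so $Z_n=0$ with high probability and the whole lemma is trivial.

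Consequently $\E (Z_n)_m\to\lambda^m$ for every $m$. Since the Poisson distribution is determined by its moments, $Z_n\xrightarrow{d}\Poi(\lambda)$.

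\emph{Matching statement.} The graph fails to be a matching plus isolated vertices exactly when some vertex has degree at least $2$, that is, when there is a path $ijk$ with distinct $i,j,k$. The expected number of such paths is at most
\[
n^3p_n^2=O\bigl(n^3 r_n^{2D}\bigr)=O\bigl(n^{-1}(n^2r_n^D)^2\bigr)\to0 .
\]
By Markov's inequality, with high probability no such path exists.

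\emph{Main obstacle.} The only delicate point is the overlapping-pairs case of the factorial moments. Edge events sharing a vertex are not independent, so we cannot simply multiply probabilities. The spanning-forest conditioning handles this: it uses only that each torus ball has volume exactly $p_n$, by \eqref{eq:ball-volume}, and it does not rely on any independence between edges that share a vertex.
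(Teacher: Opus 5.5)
Your proposal is correct and follows the paper's proof essentially step by step. It uses factorial moments with the disjoint-pair main term, the spanning-forest bound $p_n^{v-c}$ together with $v\ge 2c+1$ for overlapping tuples, and a first-moment bound of order $n^3p_n^2$ on two edges sharing a vertex for the matching claim. The only difference is your separate treatment of $a=0$, which is unnecessary: the overlap bound only needs $p_n=O(n^{-2})$, and this holds for every $a\in[0,\infty)$.
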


\begin{proof}
We use factorial moments.  Fix $m\ge1$.  The factorial moment $(Z_n)_m$ is the
sum over ordered $m$-tuples of distinct unordered vertex pairs.

First consider tuples in which the $m$ pairs are vertex-disjoint.  The
corresponding edge events depend on disjoint sets of sample points, and hence
are independent.  The number of ordered $m$-tuples of vertex-disjoint unordered
pairs is
$
   \frac{(n)_{2m}}{2^m}.
$
Their contribution to $\E[(Z_n)_m]$ is therefore
\[
   \frac{(n)_{2m}}{2^m}p_n^m
   \longrightarrow
   (2^{D-1}a)^m .
\]

It remains to show that tuples with overlapping pairs contribute negligibly.
Fix such a tuple, and let $H$ be the graph spanned by its selected pairs.  Let
$v$ be the number of vertices of $H$ and $c$ the number of connected components.
Since at least one component contains two selected edges, while every component
contains at least one edge, we have
$
   v>2c .
$
Choose a spanning forest of $H$.  It has $v-c$ edges.  If all selected edges are
present, then in particular all edges of this spanning forest are present.  By
exposing the vertices of the forest component by component, starting from one
root in each component, we get
\[
   \Prob(\text{all selected edges occur})\le p_n^{\,v-c}.
\]
Indeed, each non-root vertex in the forest must fall inside an $L^\infty$ ball
of radius $r_n$ around its parent, an event of probability $p_n$, conditionally
on the vertices already exposed.

For each fixed isomorphism type of $H$, there are $O(n^v)$ choices of the
involved labelled vertices.  Since $n^2r_n^D=O(1)$, we have $p_n=O(n^{-2})$.
Thus the total contribution of this type is
\[
   O(n^v p_n^{\,v-c})
   =
   O(n^{2c-v})
   =
   o(1).
\]
There are only finitely many such types for fixed $m$, so all overlapping
tuples together contribute $o(1)$.  Hence
\[
   \E[(Z_n)_m]\longrightarrow (2^{D-1}a)^m
   \qquad (m=1,2,\ldots).
\]
These are the factorial moments of $\Poi(2^{D-1}a)$, and the Poisson
convergence follows.

We now prove the matching assertion.  Let $Y_n$ be the number of unordered
pairs of distinct edges sharing a vertex.  For each triple of vertices there
are three possible pairs of edges sharing a vertex, and each such pair occurs
with probability $p_n^2$.  Therefore
\[
   \E Y_n
   \le
   3\binom n3 p_n^2
   =
   O(n^3r_n^{2D})
   =
   O(n^{-1}),
\]
because $r_n^D=O(n^{-2})$.  By Markov's inequality, $Y_n\to0$ in probability.
Thus, with high probability, no two edges share a vertex, which is exactly to
say that the graph is a matching together with isolated vertices.
\end{proof}

\begin{proof}[Proof of Theorem~\ref{thm:first-edge-convergence}]
Put
$
   \lambda=2^{D-1}a.
$
By Lemma~\ref{lem:edge-poisson}, $Z_n\xrightarrow{d}\Poi(\lambda)$, and with
high probability $G_D(n;r_n)$ is a matching together with isolated vertices.

Let $\varphi$ be a first-order graph sentence.  For $j\ge0$, let
$b_\varphi(j)$ be the eventual truth value of $\varphi$ on $M(j,N)$ as
$N\to\infty$.  This eventual value exists by
Lemma~\ref{lem:empty-complete-matching}.

Let $A_n$ be the event that $G_D(n;r_n)$ is a matching.  Fix $J\ge0$.  For all
sufficiently large $n$, and for every $0\le j\le J$, the graph
$M(j,n-2j)$ satisfies $\varphi$ if and only if $b_\varphi(j)=1$.  Hence, for
such $n$,
\[
\left|
   \Prob(G_D(n;r_n)\models\varphi)
   -
   \sum_{j=0}^{J} b_\varphi(j)\Prob(Z_n=j)
\right|
\le
   \Prob(A_n^c)+\Prob(Z_n>J).
\]
The first term tends to zero.  Since $Z_n$ converges in distribution to a
Poisson random variable, the sequence $(Z_n)$ is tight.  Given
$\varepsilon>0$, choose $J$ so large that
$
   \limsup_{n\to\infty}\Prob(Z_n>J)<\varepsilon .
$
Letting $n\to\infty$ with this fixed $J$ gives
\[
\limsup_{n\to\infty}
\left|
   \Prob(G_D(n;r_n)\models\varphi)
   -
   \sum_{j=0}^{J}
      e^{-\lambda}\frac{\lambda^j}{j!}\,b_\varphi(j)
\right|
\le \varepsilon .
\]
Finally let $J\to\infty$.  Since $b_\varphi(j)\in\{0,1\}$, the Poisson series
is absolutely convergent, and we obtain
\[
   \lim_{n\to\infty}\Prob(G_D(n;r_n)\models\varphi)
   =
   \sum_{j=0}^{\infty}
      e^{-\lambda}\frac{\lambda^j}{j!}\,b_\varphi(j),
\]
which is the claimed convergence law.

If $a=0$, then $\lambda=0$, so $Z_n\to0$ in probability.  Thus
$G_D(n;r_n)$ is empty with high probability, and the zero-one law follows from
Lemma~\ref{lem:empty-complete-matching}.  If $a\in(0,\infty)$, the first-order
sentence
$
   \exists x\exists y\,(x\ne y\wedge E(x,y))
$
has limiting probability
$
   1-e^{-\lambda}=1-e^{-2^{D-1}a},
$
which lies strictly between $0$ and $1$.  Hence the zero-one law fails in this
case.
\end{proof}

\begin{remark}
The condition $r_n^D\ll n^{-1}$ says that the expected degree tends to zero; it
does not by itself imply that the graph is empty with high probability.  The
empty-graph regime is
$
   n^2r_n^D\to0,
$
since
\[
   \E e(G_D(n;r_n))
   =
   \binom n2(2r_n)^D
   \sim
   2^{D-1}n^2r_n^D .
\]
\end{remark}

\section{Critical sparse component windows}
\label{sec:component-thresholds}

This section proves Theorem~\ref{thm:critical-component-window} and then
deduces Corollary~\ref{thm:clique-components}.  We begin with two geometric
facts about the constants in \eqref{eq:general-geometric-volume}.

\begin{lemma}
\label{lem:general-geometric-volume}
If $H$ is a finite connected graph, then $\gamma_D(H)<\infty$.  If $H$ is
strictly $D$-geometric, then $\gamma_D(H)>0$.
\end{lemma}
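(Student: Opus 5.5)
For finiteness, I would use connectivity of $H$ to confine the integrand of \eqref{eq:general-geometric-volume} to a bounded region. Suppose $G_\infty(0,z_2,\ldots,z_s)\cong H$. Since $H$ is connected, every point $z_i$ is joined to the point $0$ by a path of at most $s-1$ edges. Each edge has $L^\infty$-length at most one, so the triangle inequality gives $\|z_i\|_\infty\le s-1$. The integrand therefore vanishes outside the cube $[-(s-1),s-1]^{D(s-1)}$. Hence
\[
\gamma_D(H)\le (2(s-1))^{D(s-1)}<\infty .
\]
The integrand is the indicator of a Borel set, being a finite Boolean combination of closed and open conditions on pairwise distances. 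So the integral is well defined. The case $H=K_1$ is trivial, since $\gamma_D(K_1)=1$ by convention.

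For positivity, I would exploit the strict inequalities in the definition of strictly $D$-geometric. Let $H$ be strictly $D$-geometric with $s\ge2$, and take witnessing points $z_1,\ldots,z_s$. Translating, I may assume $z_1=0$; translation preserves all differences. Define
\[
\delta=\min_{i<j}\bigl|\,\|z_i-z_j\|_\infty-1\,\bigr|,
\]
which is positive because every one of the finitely many pairwise distances is strictly on one side of $1$.

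Now take any $w_2,\ldots,w_s$ with $\|w_i-z_i\|_\infty<\delta/2$, and put $w_1=0$. For each pair, $\bigl|\|w_i-w_j\|_\infty-\|z_i-z_j\|_\infty\bigr|<\delta$. Consequently edges of $H$ still have distance $<1\le 1$, and non-edges still have distance $>1$. So under the labelling $i\mapsto w_i$, the graph $G_\infty(0,w_2,\ldots,w_s)$ equals $H$, and in particular is isomorphic to it.

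The integrand thus equals $1$ on the open box $\prod_{i\ge2}\{w_i:\|w_i-z_i\|_\infty<\delta/2\}$, which has Lebesgue measure $\delta^{D(s-1)}>0$. Therefore $\gamma_D(H)\ge\delta^{D(s-1)}>0$.

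There is no real obstacle. The one point needing care is the mismatch in conventions: $G_\infty$ uses the closed condition ``$\le1$'', while strict geometricity uses the strict inequalities. The strict inequalities are exactly what make the configuration stable under perturbation, so the open neighbourhood survives regardless of which convention defines adjacency.
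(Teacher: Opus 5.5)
Your proof is correct and follows the same route as the paper. Connectivity gives $\|z_i\|_\infty\le s-1$, so the integration region is bounded; and after translating $z_1=0$, the strict inequalities of a strict realization persist on an open neighbourhood of positive volume, which lies in the defining region. You simply make the paper's neighbourhood explicit through $\delta$, which yields the concrete bounds $\delta^{D(s-1)}\le\gamma_D(H)\le(2(s-1))^{D(s-1)}$.
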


\begin{proof}
Let $s=|V(H)|$ and suppose that
$G_\infty(0,z_2,\ldots,z_s)\cong H$.  Since this graph is connected, every
$z_i$ can be joined to $0$ by a path with at most $s-1$ edges.  Hence
\[
   \|z_i\|_\infty\le s-1
   \qquad(2\le i\le s),
\]
so the region in \eqref{eq:general-geometric-volume} is bounded and has
finite volume.

Now suppose that $H$ is strictly $D$-geometric.  Choose a strict realization
and translate the point carrying label $1$ to the origin.  All edge
distances are strictly less than one and all non-edge distances are strictly
greater than one.  These inequalities persist on a sufficiently small open
neighborhood of the realization.  This neighborhood is contained in the
region defining $\gamma_D(H)$ and has positive volume.
\end{proof}

\begin{lemma}
\label{lem:general-component-probability}
Let $H$ be a fixed connected graph on $s\ge1$ vertices.  If $r_n\to0$, then,
for all sufficiently large $n$,
\begin{equation}
\label{eq:induced-H-probability}
   \Prob\bigl(G_D(n;r_n)[\{1,\ldots,s\}]\cong H\bigr)
   =\gamma_D(H)r_n^{D(s-1)}.
\end{equation}
If in addition $nr_n^D\to0$, then
\begin{equation}
\label{eq:H-component-probability}
\begin{aligned}
&\Prob\bigl(\{1,\ldots,s\}
   \text{ is a component isomorphic to }H\bigr)\\
&\qquad=\gamma_D(H)r_n^{D(s-1)}(1+o(1)).
\end{aligned}
\end{equation}

More generally, let $H_1,\ldots,H_m$ be fixed strictly $D$-geometric
connected graphs, write $s_i=|V(H_i)|$, and let
$S_1,\ldots,S_m$ be pairwise disjoint fixed vertex sets with
$|S_i|=s_i$.  If $r_n\to0$ and $nr_n^D\to0$, then
\begin{equation}
\label{eq:several-components-probability}
\begin{aligned}
&\Prob\bigl(S_i\text{ is a component isomorphic to }H_i
   \text{ for every }1\le i\le m\bigr)\\
&\qquad=
\left(\prod_{i=1}^m\gamma_D(H_i)\right)
 r_n^{D\sum_{i=1}^m(s_i-1)}(1+o(1)).
\end{aligned}
\end{equation}
The error term is uniform over the choice of the pairwise disjoint sets
$S_1,\ldots,S_m$.
\end{lemma}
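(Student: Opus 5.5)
The plan is to condition on $X_1$, lift the remaining points to a Euclidean chart, and rescale by $r_n$.

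\emph{Step 1: the induced-subgraph identity.} Fix $X_1$. For $i\ge2$, write $X_i=X_1+r_nz_i$, with the representative of $X_i-X_1$ taken in $[-1/2,1/2)^D$. The map $z_i\mapsto X_i$ is measure preserving up to the Jacobian $r_n^D$. If the induced graph is connected, then by the argument of Lemma~\ref{lem:general-geometric-volume} every $z_i$ satisfies $\|z_i\|_\infty\le s-1$. So all relevant points lie in a box of side $2(s-1)r_n$ around $X_1$, and since $r_n\to0$ this box is smaller than $1/4$ for large $n$. On such a box, torus $L^\infty$ distances between any two of the points coincide with Euclidean $L^\infty$ distances of their lifts, because every coordinate difference is below $1/2$. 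Hence $d_\infty(X_i,X_j)\le r_n$ if and only if $\|z_i-z_j\|_\infty\le1$.

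It remains to account for configurations in which the induced graph is $\cong H$ but some lift leaves the box. Such configurations cannot occur: $H$ is connected, so a path of length at most $s-1$ through torus-adjacent points bounds each displacement. Thus the event $\{G[\{1,\dots,s\}]\cong H\}$ corresponds exactly to $\{G_\infty(0,z_2,\dots,z_s)\cong H\}$. Changing variables gives exactly $\gamma_D(H)r_n^{D(s-1)}$, proving \eqref{eq:induced-H-probability} with equality.

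\emph{Step 2: the component probability.} Given that the induced graph on $\{1,\dots,s\}$ is $H$, the set is a component exactly when none of the other $n-s$ points falls in $U=\bigcup_{i\le s}B_{r_n}(X_i)$. Conditionally on $X_1,\dots,X_s$, the other points are independent and uniform, so this conditional probability is $(1-\vol U)^{n-s}$. Now
\[
(2r_n)^D\le\vol U\le s(2r_n)^D,
\]
so $(1-\vol U)^{n-s}\ge 1-s2^Dnr_n^D=1-o(1)$, using $nr_n^D\to0$. The factor is also at most $1$. Multiplying by Step 1 gives \eqref{eq:H-component-probability}.

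\emph{Step 3: several components.} Apply Step 1 to each $S_i$ separately. Independence of the points across disjoint sets gives that the events ``$S_i$ induces $H_i$'' are independent. Their joint probability is therefore $\prod_i\gamma_D(H_i)r_n^{D(s_i-1)}$.

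The main obstacle is the extra requirements that distinct $S_i$ have no edges between them and that no outside point is adjacent to $\bigcup S_i$.

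\emph{No edges between the $S_i$.} Bound the probability of a cross edge by a union over pairs $i\ne j$. Using the forest-exposure argument of Lemma~\ref{lem:edge-poisson}, where the spanning trees of $H_i$ and $H_j$ plus one cross edge are all present, this probability is
\[
O\bigl(r_n^{D(\sum_i(s_i-1)+1)}\bigr),
\]
which is an $O(r_n^D)=o(1)$ relative error.

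\emph{No outside neighbours.} As in Step 2, the union of the balls around the points of $\bigcup S_i$ has volume at most $(\sum_i s_i)(2r_n)^D$. The conditional probability that all $n-\sum s_i$ outside points avoid it is therefore $1-O(nr_n^D)=1-o(1)$.

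All bounds depend only on the sizes $s_i$ and on $m$, not on which labels are used, by exchangeability. This gives the claimed uniformity over the choice of $S_1,\dots,S_m$ and establishes \eqref{eq:several-components-probability}.
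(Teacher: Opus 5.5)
Your proposal is correct and follows essentially the same route as the paper: rescaling about $X_1$ inside a single injective chart to get the exact identity, the union-of-balls avoidance bound $(1-\vol U)^{n-s}=1+o(1)$ for isolation, independence across the disjoint $S_i$, and an extra factor $r_n^D$ for any cross edge. The only bookkeeping difference is that you bound the cross-edge event jointly by forest exposure, obtaining $O\bigl(r_n^{D(\sum_i(s_i-1)+1)}\bigr)$, whereas the paper bounds it conditionally through independent uniform anchors. In your version, turning that bound into a relative $O(r_n^D)$ error uses $\gamma_D(H_i)>0$ from Lemma~\ref{lem:general-geometric-volume}, which is where strict $D$-geometricity enters and is worth stating explicitly.
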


\begin{proof}
The assertion is immediate for $s=1$ at the induced-graph level, since
$\gamma_D(K_1)=1$.  Assume first that $s\ge2$.  By translation invariance,
condition on $X_1$ and translate it to the origin.  On the event that the
induced graph on $\{1,\ldots,s\}$ is connected, every $X_i$ is joined to
$X_1$ by a path of at most $s-1$ edges.  Therefore
\[
   d_\infty(X_i,X_1)\le(s-1)r_n.
\]
For all sufficiently large $n$, choose the unique representative
$h_i\in[-1/2,1/2)^D$ of $X_i-X_1$ satisfying
\[
   \|h_i\|_\infty=d_\infty(X_i,X_1)\le(s-1)r_n<\frac14.
\]
Then $\|h_i-h_j\|_\infty<1/2$ for all $i,j$, so
\[
   d_\infty(X_i,X_j)=\|h_i-h_j\|_\infty.
\]
Writing $h_i=r_nz_i$, the induced graph is isomorphic to $H$ exactly when
$
   (z_2,\ldots,z_s)
$
belongs to the region in \eqref{eq:general-geometric-volume}.

Conversely, if
$G_\infty(0,z_2,\ldots,z_s)\cong H$, connectedness gives
$\|z_i\|_\infty\le s-1$.  Thus, for all sufficiently large $n$, the points
$X_1+r_nz_i$ lie in the same injective torus chart, and all relevant torus
distances agree with the scaled Euclidean distances.  The change of
variables $h_i=r_nz_i$ is therefore a bijection between the two configuration
regions, with Jacobian $r_n^{D(s-1)}$.  This proves
\eqref{eq:induced-H-probability}.

On the induced-$H$ event, let
\[
   U=\bigcup_{i=1}^s B_{r_n}(X_i).
\]
The displayed vertices form an isolated component precisely when none of the
remaining sample points lies in $U$.  Uniformly over the displayed
configuration,
$
   \vol(U)\le s(2r_n)^D,
$
and hence, when $nr_n^D\to0$,
\[
   (1-\vol(U))^{n-s}=1+o(1).
\]
This proves \eqref{eq:H-component-probability}, including the case $s=1$.

We now consider several disjoint sets.  Their internal induced-graph events
are independent, and by \eqref{eq:induced-H-probability} their joint
probability is
\[
   \left(\prod_{i=1}^m\gamma_D(H_i)\right)
   r_n^{D\sum_i(s_i-1)}.
\]
Choose one anchor vertex in each $S_i$.  Conditional on the internal relative
configurations, the anchors remain independent and uniform on the torus.  By
connectedness, every vertex in $S_i$ lies within $(s_i-1)r_n$ of its anchor.
Thus, if a cross-edge joins $S_i$ to $S_j$, the two anchors must be within
$O(r_n)$ of one another.  Uniformly over the internal configurations,
\[
   \Prob(\text{some cross-edge occurs}\mid\text{internal configurations})
   =O(r_n^D).
\]
The union of the $r_n$-balls around all displayed vertices has volume
$O(r_n^D)$, so the conditional probability that some outside vertex is
adjacent to a displayed vertex is $O(nr_n^D)$.  It follows that, conditional
on all internal events, the probability that the displayed sets are distinct
isolated components is
\[
   1+O(r_n^D+nr_n^D)=1+o(1).
\]
This proves \eqref{eq:several-components-probability}.  Exchangeability makes
the estimate uniform over the chosen disjoint label sets.
\end{proof}

\begin{proof}[Proof of Theorem~\ref{thm:critical-component-window}]
Put
$
   t_n=nr_n^D.
$
The hypothesis is equivalent to
\[
   nt_n^{k-1}\longrightarrow a.
\]
Consequently,
\[
   t_n\longrightarrow0,
   \qquad r_n\longrightarrow0,
   \qquad
   nt_n^k=(nt_n^{k-1})t_n\longrightarrow0.
\]
In particular, $nr_n^D\to0$ and
\begin{equation}
\label{eq:no-k-plus-one-scale}
   n^{k+1}r_n^{Dk}\longrightarrow0.
\end{equation}

We first prove the joint Poisson limit.  Enumerate
$
   \mathcal H_D^{(k)}=\{H_1,\ldots,H_\ell\}
$
and fix nonnegative integers $m_1,\ldots,m_\ell$.  Put
$M=m_1+\cdots+m_\ell$.  Expanding the mixed factorial moment gives
\[
   \E\left[\prod_{j=1}^\ell(C_{n,H_j})_{m_j}\right].
\]
Two distinct selected components cannot overlap, so only families of $M$
pairwise disjoint $k$-sets contribute.  The number of ordered such families,
with the prescribed type attached to each position, is
$
   \frac{(n)_{kM}}{(k!)^M}.
$
By Lemma~\ref{lem:general-component-probability}, the probability that a
fixed family gives components of the prescribed types is
\[
   \left(\prod_{j=1}^\ell\gamma_D(H_j)^{m_j}\right)
   r_n^{D(k-1)M}(1+o(1)).
\]
Therefore
\begin{align*}
   \E\left[\prod_{j=1}^\ell(C_{n,H_j})_{m_j}\right]
   &\longrightarrow
   \prod_{j=1}^\ell
   \left(\frac{a\gamma_D(H_j)}{k!}\right)^{m_j}\\
   &=\prod_{j=1}^\ell\lambda_{H_j}^{m_j}.
\end{align*}
These are the mixed factorial moments of independent Poisson variables with
means $\lambda_{H_1},\ldots,\lambda_{H_\ell}$.  The multivariate factorial
moment criterion gives
\[
   (C_{n,H_1},\ldots,C_{n,H_\ell})
   \xrightarrow{d}
   (Z_{H_1},\ldots,Z_{H_\ell}),
\]
with independent $Z_{H_j}\sim\Poi(\lambda_{H_j})$.

Next fix $H\in\mathcal H_D^{(s)}$ with $1\le s<k$.  By
Lemma~\ref{lem:general-component-probability},
\begin{align*}
   \mu_{n,H}:=\E C_{n,H}
   &\sim
   \frac{\gamma_D(H)}{s!}
   n^s r_n^{D(s-1)}\\
   &=\frac{\gamma_D(H)}{s!}nt_n^{s-1}.
\end{align*}
Since $H$ is strictly $D$-geometric,
Lemma~\ref{lem:general-geometric-volume} gives $\gamma_D(H)>0$, and
\[
   nt_n^{s-1}
   =\frac{nt_n^{k-1}}{t_n^{k-s}}
   \longrightarrow\infty.
\]
Thus $\mu_{n,H}\to\infty$.  Applying
\eqref{eq:several-components-probability} to two disjoint $s$-sets gives
\begin{align*}
   \E[(C_{n,H})_2]
   &=\frac{(n)_{2s}}{(s!)^2}
     \gamma_D(H)^2r_n^{2D(s-1)}(1+o(1))\\
   &=(1+o(1))\mu_{n,H}^2.
\end{align*}
Hence
\[
   \operatorname{Var}(C_{n,H})
   =o(\mu_{n,H}^2)+O(\mu_{n,H}),
\]
and Chebyshev's inequality yields
\[
   \frac{C_{n,H}}{\mu_{n,H}}\longrightarrow1
   \qquad\text{in probability}.
\]
In particular, $C_{n,H}\to\infty$ in probability.

We now rule out larger components.  If a component has more than $k$
vertices, then some set of $k+1$ vertices spans a connected graph and hence
contains a spanning tree.  For a fixed labelled tree on $k+1$ vertices,
exposing the vertices along the tree gives probability at most
\[
   (2r_n)^{Dk}
\]
that all tree edges are present.  Since there are only finitely many labelled
trees on $k+1$ vertices, the expected number of connected $(k+1)$-sets is
\[
   O(n^{k+1}r_n^{Dk})=o(1)
\]
by \eqref{eq:no-k-plus-one-scale}.  Thus, with high probability, every
component has at most $k$ vertices.

Since the sample distribution is continuous, with probability one no pair
of sampled points has torus $L^\infty$-distance exactly $r_n$.  On the event
that every component has at most $k$ vertices, lift each component to
$\R^D$ by choosing one vertex as a root and following a spanning tree.  For
all sufficiently large $n$, the lifted component lies in a single torus
chart, and its Euclidean $L^\infty$-distances agree with the torus distances.
After scaling by $1/r_n$, every edge has distance strictly less than one and
every non-edge has distance strictly greater than one.  Hence every component
type that occurs belongs to $\mathcal H_{D,k}$.

It remains to prove the first-order limit formula.  Let $\varphi$ have
quantifier depth $q$, and let
\[
   \mathbf C_n=(C_{n,H})_{H\in\mathcal H_D^{(k)}}.
\]
Let $A_n$ be the event that every component belongs to
$\mathcal H_{D,k}$ and that there are at least $q$ components of every type
in $\mathcal H_{D,k-1}$.  The preceding arguments and the finiteness of
$\mathcal H_{D,k-1}$ give
$
   \Prob(A_n)\longrightarrow1.
$
On $A_n$, both $G_D(n;r_n)$ and $F_{\mathbf C_n,q}$ belong to
\[
   \mathfrak C_q\bigl(
      \mathcal H_{D,k-1},\mathcal H_D^{(k)};\mathbf C_n
   \bigr).
\]
Lemma~\ref{lem:mixed-finite-profile-logic} therefore gives
\[
   G_D(n;r_n)\models\varphi
   \quad\Longleftrightarrow\quad
   F_{\mathbf C_n,q}\models\varphi
\]
on $A_n$.  By the definition of $b_\varphi$,
\[
   \left|
   \Prob(G_D(n;r_n)\models\varphi)
   -\E[b_\varphi(\mathbf C_n)]
   \right|
   \le\Prob(A_n^c)\longrightarrow0.
\]

The vector $\mathbf C_n$ converges in distribution to
$\mathbf Z=(Z_H)_{H\in\mathcal H_D^{(k)}}$.  Since this is a finite vector
taking values in a countable discrete space, finite-box truncation gives
\[
   \E[b_\varphi(\mathbf C_n)]
   \longrightarrow
   \E[b_\varphi(\mathbf Z)].
\]
Independence of the Poisson coordinates now gives exactly
\eqref{eq:critical-component-limit-law}.

Finally, $K_k\in\mathcal H_D^{(k)}$ and, by
Lemma~\ref{lem:gamma-value} below, $\gamma_D(K_k)=k^D>0$.  The sentence
asserting the existence of a $K_k$-component therefore has limiting
probability
\[
   1-\exp\left\{-\frac{k^Da}{k!}\right\}\in(0,1).
\]
Thus the zero-one law fails.
\end{proof}

For completeness, we evaluate the geometric volume for cliques.  Put
$z_1=0$.  Then
\[
   \gamma_D(K_k)
   =
   \int_{(\R^D)^{k-1}}
   \1\left\{
      \|z_i-z_j\|_\infty\le1
      \text{ for all }1\le i<j\le k
   \right\}
   dz_2\cdots dz_k.
\]

\begin{lemma}
\label{lem:gamma-value}
For every $D\ge1$ and $k\ge2$,
\[
   \gamma_D(K_k)=k^D.
\]
\end{lemma}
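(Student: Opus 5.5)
The key observation is that the $L^\infty$ condition splits into independent coordinate conditions. Put $z_1=0$. Then $\|z_i-z_j\|_\infty\le1$ for all $i<j$ holds exactly when, for each coordinate $t\in\{1,\ldots,D\}$, the real numbers $0,z_2^{(t)},\ldots,z_k^{(t)}$ pairwise differ by at most one. The integrand therefore factors as a product over coordinates, and Fubini gives
\[
   \gamma_D(K_k)=\gamma_1(K_k)^D,
   \qquad
   \gamma_1(K_k)=\int_{\R^{k-1}}
   \1\Bigl\{\max(0,y_2,\ldots,y_k)-\min(0,y_2,\ldots,y_k)\le1\Bigr\}
   \,dy_2\cdots dy_k .
\]
So everything reduces to showing $\gamma_1(K_k)=k$.

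For the one-dimensional integral, I will compute it as the volume of the set of $(y_2,\ldots,y_k)$ such that the $k$ points $0,y_2,\ldots,y_k$ all lie in some interval of length one. I will decompose according to which point is the minimum; ties have measure zero.

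\emph{Case 1: $0$ is the minimum.} The constraint is $y_i\in[0,1]$ for every $i$, which has volume $1$.

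\emph{Case 2: $y_j$ is the minimum, for some $j\in\{2,\ldots,k\}$.} Set $m=y_j\in[-1,0]$. The remaining $k-2$ variables must lie in $[m,m+1]$, and the condition $0\le m+1$ holds automatically. Integrating over $m$ gives volume $\int_{-1}^0 1\,dm=1$.

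These $k$ cases are disjoint up to null sets, so $\gamma_1(K_k)=k$.

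A second way to see the same count is to change variables to $u_i=y_i-\min$ and to view the configuration as "minimum position times the offsets of the others". The minimum ranges over an interval of length one relative to the point $0$, and there are $k$ choices of which point is minimal. This confirms $\gamma_1(K_k)=k$, and hence $\gamma_D(K_k)=k^D$.

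The only delicate point is the bookkeeping in Case 2: one must check that the condition on the fixed point $0$ reduces exactly to $m\in[-1,0]$, and that the remaining points then range freely over the unit interval $[m,m+1]$. Both are immediate, so I do not expect a real obstacle. As a sanity check, $k=2$ gives $\gamma_1(K_2)=2$, which is the length of $[-1,1]$, and this matches the edge-count constant $2^{D-1}a=\gamma_D(K_2)a/2!$ in Lemma~\ref{lem:edge-poisson}.
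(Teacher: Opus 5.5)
Your proposal is correct and follows the paper's proof essentially verbatim: you factor the $L^\infty$ constraint over the $D$ coordinates, then compute the one-dimensional volume as $k$ by splitting according to which of the $k$ labelled numbers is the minimum, with each case contributing volume one. The phrase ``$0\le m+1$ holds automatically'' is slightly circular, since that inequality is precisely what gives $m\ge -1$, but the case region $\{m\in[-1,0],\ y_i\in[m,m+1]\}$ you integrate over is the right one.
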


\begin{proof}
The defining region factors over the $D$ coordinate directions.  It is
therefore enough to compute the one-dimensional volume
\[
\begin{aligned}
   v_k:=\vol\Bigl\{(u_2,\ldots,u_k)\in\R^{k-1}:\ &
   \max(0,u_2,\ldots,u_k)\\
   &{}-\min(0,u_2,\ldots,u_k)\le1\Bigr\}.
\end{aligned}
\]
Outside a null set, exactly one of the $k$ labelled numbers
$0,u_2,\ldots,u_k$ is the minimum.  If the minimum is $0$, then every $u_i$
lies in $[0,1]$, a region of volume one.  If the minimum is $u_j$, then
$u_j\in[-1,0]$ and, for each fixed $u_j$, every other $u_i$ lies in
$[u_j,u_j+1]$.  This region also has volume one after integration over
$u_j\in[-1,0]$.  The $k$ possible choices of the minimum therefore give
$v_k=k$.  Taking the product over the $D$ coordinates yields
$\gamma_D(K_k)=v_k^D=k^D$.
\end{proof}

\begin{proof}[Proof of Corollary~\ref{thm:clique-components}]
If $a\in(0,\infty)$, this is the $K_k$ coordinate of
Theorem~\ref{thm:critical-component-window}, together with
Lemma~\ref{lem:gamma-value}.  If $a=0$, then $r_n\to0$ and
$nr_n^D\to0$.  Lemma~\ref{lem:general-component-probability} gives
\[
   \E C_{n,k}
   \sim\frac{k^D}{k!}n^k r_n^{D(k-1)}\longrightarrow0.
\]
Thus $C_{n,k}\to0$ in probability by Markov's inequality, which is the
Poisson limit with mean zero.  The first-order conclusion follows from the
sentence displayed in the proof below.

The existence of a connected component isomorphic to $K_k$ is expressed by
\[
\begin{aligned}
\exists x_1\cdots\exists x_k\bigg[&
   \bigwedge_{1\le i<j\le k}x_i\ne x_j
   \wedge
   \bigwedge_{1\le i<j\le k}E(x_i,x_j) \\
&\wedge
   \forall y\left(
      \bigvee_{i=1}^k y=x_i
      \vee
      \bigwedge_{i=1}^k \neg E(y,x_i)
   \right)
\bigg].
\end{aligned}
\]
Its limiting probability for $a>0$ is
$1-\exp\{-k^Da/k!\}$.
\end{proof}

\section{Between consecutive sparse component windows}
\label{sec:between-component-windows}

We now prove Theorem~\ref{thm:between-component-windows}.  The result gives a
zero-one law in the sparse regimes between two consecutive component-size
thresholds.  For example, when $k=2$, the assumptions
\[
   n^2r_n^D\to\infty,
   \qquad
   n^3r_n^{2D}\to0
\]
imply that, with high probability, every component has size at most two, while
the numbers of edge components and isolated vertices both tend to infinity.

\begin{lemma}
\label{lem:between-window-components}
Fix $D\ge1$ and $k\ge1$, and assume
\[
   n^k r_n^{D(k-1)}\to\infty,
   \qquad
   n^{k+1}r_n^{Dk}\to0.
\]
Then, with high probability, no connected component of $G_D(n;r_n)$ has more
than $k$ vertices.  Moreover, for every strictly $D$-geometric connected graph
$H$ with $1\le |V(H)|\le k$, the number of components isomorphic to $H$ tends
to infinity in probability.
\end{lemma}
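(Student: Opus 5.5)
The plan is to copy the corresponding parts of the proof of Theorem~\ref{thm:critical-component-window}, with the hypotheses $n^k r_n^{D(k-1)}\to\infty$ and $n^{k+1}r_n^{Dk}\to0$ in place of the critical scaling. Throughout, write $t_n=nr_n^D$. Then the hypotheses read
\[
   nt_n^{k-1}\to\infty,
   \qquad
   nt_n^{k}\to0 .
\]
Dividing the second by the first gives $t_n\to0$. Hence $nr_n^D\to0$, and since $n\to\infty$ also $r_n\to0$; these are exactly the conditions needed to apply Lemma~\ref{lem:general-component-probability}. (For $k=1$ the first hypothesis is just $n\to\infty$, and the second is $n^2r_n^D\to0$.)

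For the first assertion I would use the spanning-tree bound as before. If some component has more than $k$ vertices, then some $(k+1)$-set of vertices induces a connected subgraph, and so contains a spanning tree. Fix a labelled tree on $k+1$ vertices and expose its vertices along the tree; each non-root vertex must land in the $r_n$-ball of its parent, so all tree edges are present with probability at most $(2r_n)^{Dk}$. There are finitely many labelled trees on $k+1$ vertices, so the expected number of connected $(k+1)$-sets is
\[
   O\bigl(n^{k+1}r_n^{Dk}\bigr)=o(1).
\]
Markov's inequality then shows that, with high probability, no component has more than $k$ vertices.

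For the second assertion, fix a strictly $D$-geometric connected $H$ on $s\le k$ vertices. By \eqref{eq:H-component-probability},
\[
   \mu_{n,H}:=\E C_{n,H}\sim \frac{\gamma_D(H)}{s!}\,n t_n^{s-1},
\]
and $\gamma_D(H)>0$ by Lemma~\ref{lem:general-geometric-volume}. Since $t_n\to0$ and $s-1\le k-1$, we have $nt_n^{s-1}\ge nt_n^{k-1}$ for large $n$, so $\mu_{n,H}\to\infty$; this covers $s=1$ and $s=k$ as well. Next, \eqref{eq:several-components-probability} applied to two disjoint $s$-sets gives $\E[(C_{n,H})_2]=(1+o(1))\mu_{n,H}^2$; overlapping pairs contribute nothing, because two distinct components are disjoint. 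Therefore
\[
   \operatorname{Var}(C_{n,H})=o(\mu_{n,H}^2)+O(\mu_{n,H}),
\]
and Chebyshev's inequality gives $C_{n,H}/\mu_{n,H}\to1$ in probability. In particular $C_{n,H}\to\infty$ in probability.

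I expect no serious obstacle. The one point to check is that the uniform $(1+o(1))$ error in Lemma~\ref{lem:general-component-probability} requires only $r_n\to0$ and $nr_n^D\to0$, which were verified at the start. It is also worth noting that the upper threshold $n^{k+1}r_n^{Dk}\to0$ alone already forces $t_n\to0$ when $k\ge1$.
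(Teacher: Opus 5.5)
Your proposal is correct and follows the paper's proof essentially step for step: the same substitution $t_n=nr_n^D$, the same spanning-tree first-moment bound $O(n^{k+1}r_n^{Dk})$ on connected $(k+1)$-sets, and the same second-moment argument via \eqref{eq:H-component-probability}, \eqref{eq:several-components-probability} and Chebyshev's inequality. Your inequality $nt_n^{s-1}\ge nt_n^{k-1}$ (valid once $t_n\le1$) replaces the paper's quotient $nt_n^{k-1}/t_n^{k-s}$; this is a harmless variant.
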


\begin{proof}
Put
$
   t_n=n r_n^D.
$ 
The second assumption says $nt_n^k\to0$, and hence $t_n\to0$.  The first
assumption says $nt_n^{k-1}\to\infty$.  Consequently,
\[
   r_n\to0,
   \qquad nr_n^D=t_n\to0,
\]
and, for every $1\le s\le k$,
\begin{equation}
\label{eq:between-all-smaller-scales}
   n^s r_n^{D(s-1)}
   =nt_n^{s-1}
   \longrightarrow\infty.
\end{equation}
Indeed, for $s<k$,
\[
   nt_n^{s-1}
   =\frac{nt_n^{k-1}}{t_n^{k-s}}\longrightarrow\infty,
\]
while the case $s=k$ is the first assumption.

We first rule out large components.  If a component has more than $k$
vertices, then some set of $k+1$ vertices spans a connected graph and hence
contains a spanning tree.  For a fixed labelled tree on $k+1$ vertices,
exposing the vertices along the tree gives probability at most
$
   (2r_n)^{Dk}
$
that all tree edges are present.  Since there are only finitely many labelled
trees on $k+1$ vertices, the expected number of connected $(k+1)$-sets is
\[
   O(n^{k+1}r_n^{Dk})=o(1).
\]
Thus, with high probability, no component has more than $k$ vertices.

Now fix $H\in\mathcal H_D^{(s)}$ with $1\le s\le k$, and put
$\mu_{n,H}=\E C_{n,H}$.  Lemmas~\ref{lem:general-geometric-volume} and
\ref{lem:general-component-probability}, together with
\eqref{eq:between-all-smaller-scales}, give
\[
   \mu_{n,H}
   \sim\frac{\gamma_D(H)}{s!}n^s r_n^{D(s-1)}
   \longrightarrow\infty.
\]
The two-component estimate \eqref{eq:several-components-probability} gives
\begin{align*}
   \E[(C_{n,H})_2]
   &=\frac{(n)_{2s}}{(s!)^2}
     \gamma_D(H)^2r_n^{2D(s-1)}(1+o(1))\\
   &=(1+o(1))\mu_{n,H}^2.
\end{align*}
Therefore
\[
   \operatorname{Var}(C_{n,H})
   =o(\mu_{n,H}^2)+O(\mu_{n,H}),
\]
and Chebyshev's inequality gives
\[
   C_{n,H}\longrightarrow\infty
   \qquad\text{in probability}.
\]
\end{proof}

\begin{proof}[Proof of Theorem~\ref{thm:between-component-windows}]
Fix $m\ge1$.  By Lemma~\ref{lem:between-window-components}, with high
probability no component has more than $k$ vertices, and every strictly
$D$-geometric connected graph with at most $k$ vertices appears as a component
at least $m$ times.

It remains to justify that, with high probability, no other component type can
occur.  Since the sample distribution is continuous, with probability one no
pair of sampled points has torus $L^\infty$-distance exactly $r_n$.  Also
$r_n\to0$.  On the high-probability event that all components have size at most
$k$, take any component $C$ with vertices $v_1,\ldots,v_s$, where $s\le k$.
If $s=1$, then $C\cong K_1\in\mathcal H_{D,k}$.  Suppose $s\ge2$.  Then
$k\ge2$, and the first hypothesis implies that $r_n>0$ for all sufficiently
large $n$.

Choose a spanning tree of $C$ and lift the component to $\R^D$ by following the
tree from a chosen root.  For all sufficiently large $n$, the whole lifted
component lies in a cube of side $O(k r_n)$, which is smaller than $1/2$ in
each coordinate.  Therefore the Euclidean $L^\infty$-distances in this lift
agree with the corresponding torus distances among the vertices of $C$.
Since no distance is exactly $r_n$, edges of $C$ have lifted distance strictly
less than $r_n$, while non-edges have lifted distance strictly greater than
$r_n$.  Scaling the lift by $1/r_n$ gives a strict $D$-geometric realization of
the isomorphism type of $C$.  Hence every component type that occurs belongs to
$\mathcal H_{D,k}$.

Let $\varphi$ be a first-order graph sentence of quantifier depth $q$.  With
probability tending to one, every component belongs to $\mathcal H_{D,k}$ and
there are at least $q$ components of every type in $\mathcal H_{D,k}$.  Taking
$\mathcal S=\mathcal H_{D,k}$ and $\mathcal T=\varnothing$ in
Lemma~\ref{lem:mixed-finite-profile-logic}, all graphs with this property have
the same truth value for $\varphi$.  Hence
\[
   \Prob(G_D(n;r_n)\models\varphi)
\]
tends to either $0$ or $1$.  This proves the zero-one law.

The same component-wise Ehrenfeucht--Fra\"isse argument identifies the
limiting theory.  For each quantifier depth $q$, the random graph is, with
high probability, $q$-equivalent to the countable disjoint union containing
infinitely many copies of every graph in $\mathcal H_{D,k}$.  Therefore
$G_D(n;r_n)$ converges first-order to the deterministic theory of that
countable disjoint union.
\end{proof}

\section{Adjacent twin pairs at fixed radius}
\label{sec:adjacent-twins}

Two adjacent vertices $x,y$ are adjacent twins if they have the same neighbors
outside the pair.  The existence of such a pair is expressed by the first-order
sentence
\begin{equation}
\label{eq:twin-sentence}
 \exists x\exists y\left[
 x\ne y\wedge E(x,y)\wedge
 \forall z\bigl((z=x\vee z=y)\vee(E(x,z)\leftrightarrow E(y,z))\bigr)
 \right].
\end{equation}

\subsection{Planar convex connection bodies}

Let $K\subset(-1/2,1/2)^2$ be an origin-symmetric convex body.  We view
$K$ as a subset of $\T^2$ and write
\[
   D_K(h):=K\symdiff(K+h),
   \qquad
   A_K(h):=\area(D_K(h)),
   \qquad h\in\T^2.
\]
Here $K+h$ is a torus translate.  For $h\in\T^2$, put
\[
   |h|_{\T,2}:=\min_{z\in\Z^2}\|h-z\|_2.
\]
Let
\[
   h_K(u):=\sup_{x\in K}\langle u,x\rangle
\]
be the support function of $K$, and let $R$ denote rotation through
$\pi/2$.

The next lemma gives the local form of the covariogram at the origin.  The
relation with projection bodies is classical; see, for example,
\cite{LangharstRoysdonZvavitch2022}.  We include the short planar argument.

\begin{lemma}
\label{lem:convex-symdiff}
For every $u\in\R^2$,
\begin{equation}
\label{eq:convex-symdiff-limit}
   A_K(tu)=4t\,h_K(Ru)+o(t)
   \qquad(t\downarrow0).
\end{equation}
Moreover, there is a constant $c_K>0$ such that
\begin{equation}
\label{eq:convex-global-lower}
   A_K(h)\ge c_K|h|_{\T,2}
   \qquad(h\in\T^2).
\end{equation}
\end{lemma}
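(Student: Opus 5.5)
The plan has two parts. First we prove the local expansion \eqref{eq:convex-symdiff-limit}, using the classical formula for the derivative of the covariogram. Then we deduce \eqref{eq:convex-global-lower} from it by a compactness argument.

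For small $t>0$ we have $|tu|$ much smaller than the distance from $K$ to the boundary of $(-1/2,1/2)^2$. So $K$ and $K+tu$ both lie in one injective chart, and we may compute in $\R^2$. There $A_K(tu)=2(\area K-\area(K\cap(K+tu)))$. We write $\area(K\setminus(K+tu))$ as a slice integral in the direction $u$. Each line parallel to $u$ meets $K$ in a segment (possibly empty). Removing the translate takes off a length $\min(t|u|,\text{chord length})$. Integrating over lines gives
\[
\area(K\setminus(K+tu))=t|u|\cdot w_K(u^\perp)+o(t),
\]
where $w_K(u^\perp)$ is the length of the projection of $K$ onto the line orthogonal to $u$. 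The error is $o(t)$ because only chords shorter than $t|u|$ contribute a deficit, and the set of such lines has measure tending to zero by dominated convergence. Since $K$ is origin-symmetric, this projection length equals $2h_K(Ru)/|u|$: the width of $K$ in the direction $Ru/|u|$ is $h_K(Ru)+h_K(-Ru)=2h_K(Ru)$, divided by $|u|$. Hence
\[
A_K(tu)=2\cdot t|u|\cdot\frac{2h_K(Ru)}{|u|}+o(t)=4t\,h_K(Ru)+o(t).
\]
The case $u=0$ is trivial.

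For the global bound, we note that the error in the slice argument is uniform over $u$ in the unit circle. The chord-length distributions vary continuously with the direction, or alternatively one can use convexity of $t\mapsto$ the intersection area, so $A_K(tu)/t\to4h_K(Ru)$ uniformly on $|u|=1$. Because $K$ is a convex body containing $0$ in its interior (origin-symmetric with nonempty interior), $\min_{|u|=1}h_K(Ru)=:\rho>0$. So there is $t_0>0$ with $A_K(h)\ge 2\rho|h|_{\T,2}$ whenever $|h|_{\T,2}\le t_0$.

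For $|h|_{\T,2}\ge t_0$ we use that $A_K$ is continuous on the compact torus. It is positive away from $0$: if $A_K(h)=0$ then $K+h=K$ up to a null set in $\T^2$. Since $K$ sits in the open fundamental square and is compact, a nonzero torus translate cannot coincide with $K$ (compare barycenters or support in a chart). Hence $A_K\ge\delta>0$ on this compact set, while $|h|_{\T,2}\le\sqrt2/2$ there. Taking $c_K=\min(2\rho,\delta\sqrt2)$ finishes the proof.

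The main obstacle is making the uniformity of the $o(t)$ term rigorous for a general convex body, whose boundary may be non-smooth. I would first handle this through monotonicity: for convex $K$, the function $t\mapsto\area(K\cap(K+tu))$ is concave on its support. Then the directional derivatives exist and converge uniformly in $u$ by the standard Dini-type argument for concave functions depending continuously on a compact parameter.
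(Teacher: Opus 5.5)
Your proof of the local expansion is the paper's argument: slice along $u$, let each chord contribute $2\min\{t|u|,\ell(y)\}$, apply dominated convergence, and use that the symmetric projection has length $2h_K(Ru)/|u|$. Your compactness step away from $h=0$ also matches the paper. However, your ``compare barycenters or support in a chart'' needs the paper's extra step. For a lift $\widetilde h$, the interior of $K+\widetilde h$ minus the closed set $\bigcup_{z\in\Z^2}(K+z)$ is open and null, hence empty. By connectedness, $K+\widetilde h$ therefore lies in a single translate $K+z$. Equality of areas gives $K+\widetilde h=K+z$, and only then does comparing barycenters give $\widetilde h=z$.

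The genuine difference is the lower bound for small $h$. The paper needs no uniformity in the direction. It takes $\rho B_2\subseteq K$ and notes that every line parallel to $h$ at distance at most $\rho/2$ from the origin meets $K$ in a chord of length at least $\sqrt3\rho$. So for small $|h|$ each such line contributes exactly $2|h|$, and $A_K(h)\ge 2\rho|h|$ follows at once.

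You instead prove that $A_K(tu)/t\to4h_K(Ru)$ uniformly on the unit circle and use $\min_{|u|=1}h_K(Ru)>0$. This is valid and yields a stronger, uniform expansion, but the monotonicity you invoke is stated backwards. For unit $u$ and small $t$, $\area(K\cap(K+tu))=\int_{u^\perp}(\ell(y)-t)_+\,dy$ is \emph{convex} in $t$, not concave. For the unit disk its second derivative is $t/\sqrt{4-t^2}>0$. What is true, and what Dini's theorem needs, is that $t\mapsto A_K(tu)$ is concave with $A_K(0)=0$. Hence $A_K(tu)/t$ increases monotonically as $t\downarrow0$ to the continuous limit $4h_K(Ru)$.

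Even Dini is more than you need. Monotonicity alone gives $A_K(tu)/t\ge\min_{|v|=1}A_K(t_0v)/t_0>0$ for $0<t\le t_0$. The paper's inscribed-ball argument is the most elementary of these options, since it requires neither the convexity of $t\mapsto\area(K\cap(K+tu))$ nor any uniform limit.
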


\begin{proof}
Fix $u\ne0$, write $u=s\theta$ with $s=\|u\|_2$ and
$\|\theta\|_2=1$, and use coordinates parallel to $\theta$ and
$\theta^\perp$.  For $y\in\theta^\perp$, the section
\[
   I_y:=\{a\in\R:y+a\theta\in K\}
\]
is an interval, possibly empty; write $\ell(y)$ for its length.  Since $K$
is compactly contained in a fundamental square, for all sufficiently small
$t$ the torus and Euclidean translations agree.  On each line parallel to
$\theta$,
\[
   \operatorname{length}\bigl(I_y\symdiff(I_y+ts)\bigr)
   =2\min\{ts,\ell(y)\}.
\]
Fubini's theorem therefore gives
\[
   \frac{A_K(tu)}t
   =2s\int_{\theta^\perp}
      \min\left\{1,\frac{\ell(y)}{ts}\right\}
   \,dy.
\]
The integrand is supported on the bounded projection $K|\theta^\perp$ and
is bounded by one.  Thus, as $t\downarrow0$, dominated convergence yields
\[
   \frac{A_K(tu)}t
   \longrightarrow
   2s\,\operatorname{length}(K|\theta^\perp),
\]
where $K|\theta^\perp$ is the orthogonal projection of $K$.  Since
$K=-K$, this projection is a symmetric interval of length
$2h_K(R\theta)$.  Hence the last limit equals
\[
   4s h_K(R\theta)=4h_K(Ru),
\]
which proves \eqref{eq:convex-symdiff-limit}.  The case $u=0$ is immediate.

For the lower bound, choose $\rho>0$ such that the Euclidean ball of radius
$\rho$ about the origin is contained in $K$.  If
$h=t\theta$ is the shortest representative of a sufficiently small nonzero
torus displacement, then for every $y\in\theta^\perp$ with
$\|y\|_2\le\rho/2$, the interval $I_y$ has length at least
$\sqrt3\rho$.  Thus, when $t$ is sufficiently small, the symmetric
difference on each such line has length $2t$.  Integrating over an interval
of $y$-values of length $\rho$ gives
\[
   A_K(h)\ge2\rho t.
\]

It remains to consider displacements bounded away from zero.  The function
$A_K$ is continuous.  Also, $A_K(h)=0$ only for $h=0$ in $\T^2$.  To see
this, suppose that $A_K(h)=0$.  The two torus sets agree up to a null set;
since they are closures of their interiors and their boundaries have area
zero, they agree as closed sets.  Let $\widetilde h\in\R^2$ be a lift of
$h$.  Then $K+\widetilde h$ is a connected subset of
\[
   q^{-1}(q(K))=\bigcup_{z\in\Z^2}(K+z),
\]
where $q:\R^2\to\T^2$ is the quotient map.  The sets $K+z$ are pairwise
separated because $K$ is compactly contained in a fundamental square.  Hence
$K+\widetilde h$ is contained in one translate $K+z$.  The two sets have the
same area, so they are equal.  Since $K$ is bounded, this forces
$\widetilde h=z$, and therefore $h=0$ on the torus.  Compactness now gives a
positive lower bound for $A_K(h)/|h|_{\T,2}$ away from zero.  Combining this
with the local estimate proves \eqref{eq:convex-global-lower}.
\end{proof}

We now prove the planar Poisson theorem.

\begin{proof}[Proof of Theorem~\ref{thm:planar-convex-twins}]
Write $T_n=T_n(K)$.  For two vertices at displacement $h$, they are
adjacent twins exactly when $h\in K$ and none of the remaining $n-2$
sample points lies in $D_K(h)$.  Hence
\begin{equation}
\label{eq:convex-twin-expectation}
   \E T_n
   =\binom n2\int_K(1-A_K(h))^{n-2}\,dh.
\end{equation}
After the change of variables $u=nh$,
\[
   \E T_n
   =\frac{n(n-1)}{2n^2}
     \int_{nK}
       \left(1-A_K\left(\frac un\right)\right)^{n-2}
     \,du.
\]
For every fixed $u$, Lemma~\ref{lem:convex-symdiff} gives
\[
   nA_K(u/n)\longrightarrow4h_K(Ru).
\]
Since the origin lies in the interior of $K$, every fixed $u$ belongs to
$nK$ for all sufficiently large $n$.  For all sufficiently large $n$, the global lower bound in
\eqref{eq:convex-global-lower} gives
\[
   \1_{\{u\in nK\}}
   \left(1-A_K\left(\frac un\right)\right)^{n-2}
   \le e^{-c\|u\|_2}
\]
for a constant $c>0$.  This is integrable on $\R^2$.  Therefore
\begin{equation}
\label{eq:convex-twin-mean}
   \E T_n\longrightarrow
   \frac12\int_{\R^2}e^{-4h_K(Ru)}\,du.
\end{equation}

We evaluate this integral before proving the Poisson limit.  Since rotation
preserves area,
\[
   \int_{\R^2}e^{-4h_K(Ru)}\,du
   =\int_{\R^2}e^{-4h_K(u)}\,du.
\]
The polar body satisfies
\[
   K^\circ=\{u:h_K(u)\le1\},
\]
so $h_K$ is the Minkowski functional of $K^\circ$.  Consequently,
\[
   \area\{u:h_K(u)\le t\}=t^2\area(K^\circ).
\]
Integration with respect to these level sets gives
\begin{equation}
\label{eq:polar-integral}
\begin{aligned}
   \int_{\R^2}e^{-4h_K(u)}\,du
   &=2\area(K^\circ)\int_0^\infty te^{-4t}\,dt\\
   &=\frac{\area(K^\circ)}8.
\end{aligned}
\end{equation}
Thus the limit in \eqref{eq:convex-twin-mean} is
\[
   \lambda_K:=\frac{\area(K^\circ)}{16}.
\]

We prove convergence to $\Poi(\lambda_K)$ by factorial moments.  Fix
$m\ge1$ and expand $(T_n)_m$ over ordered $m$-tuples of distinct unordered
vertex pairs.  We first consider tuples with overlapping pairs.  Let such a
tuple span a graph $F$ on $v$ vertices with $c$ connected components.  Since
at least one component contains two selected pairs,
\[
   v>2c.
\]
Choose a spanning forest of $F$.  For each forest edge $e$, let $h_e$ be its
torus displacement.  If every selected pair consists of adjacent twins, all
sample points outside the $v$ displayed vertices must avoid the union of the
disagreement regions associated with the forest edges.  By
\eqref{eq:convex-global-lower}, conditional on the displayed vertices this
has probability at most
\[
   \exp\left\{-cn\max_e|h_e|_{\T,2}\right\}
\]
for a constant $c>0$.  Exposing one root in each component and then the
$v-c$ forest displacements gives
\[
   \Prob(\text{all selected pairs are twins})
   =O\bigl(n^{-2(v-c)}\bigr).
\]
There are $O(n^v)$ choices of the displayed labels.  The contribution of
this overlap type is therefore
\[
   O(n^{2c-v})=o(1).
\]
There are only finitely many overlap types for fixed $m$, so their total
contribution is $o(1)$.

It remains to consider vertex-disjoint pairs.  By symmetry, consider the
fixed matching
\[
   \{1,2\},\{3,4\},\ldots,\{2m-1,2m\}.
\]
Write
\[
   Y_a=X_{2a-1},
   \qquad
   H_a=X_{2a}-X_{2a-1},
   \qquad 1\le a\le m.
\]
For given $Y=(Y_1,\ldots,Y_m)$ and $H=(H_1,\ldots,H_m)$, let
\[
   \Delta_a(Y,H)=Y_a+D_K(H_a),
   \qquad
   V(Y,H)=\area\left(\bigcup_{a=1}^m\Delta_a(Y,H)\right).
\]
Let $\mathcal C(Y,H)$ denote the deterministic compatibility condition that
every displayed endpoint has the same adjacency to the two endpoints of
each other displayed pair.  If $P_{m,n}$ is the probability that the fixed
matching consists of adjacent twin pairs, then
\begin{equation}
\label{eq:convex-fixed-matching}
   P_{m,n}
   =\int_{K^m}\int_{(\T^2)^m}
      \1_{\mathcal C(Y,H)}(1-V(Y,H))^{n-2m}
   \,dY\,dH.
\end{equation}

Set $H_a=U_a/n$.  For fixed $U=(U_1,\ldots,U_m)$, put
\[
   D_{a,n}=D_K(U_a/n),
   \qquad
   a_{a,n}=\area(D_{a,n}),
   \qquad
   S_n(U)=\sum_{a=1}^m a_{a,n},
\]
and write
\[
   V_n(Y,U)=\area\left(\bigcup_{a=1}^m(Y_a+D_{a,n})\right).
\]
For measurable $A,B\subseteq\T^2$, Fubini's theorem gives
\begin{equation}
\label{eq:translate-overlap-average}
\begin{aligned}
&\int_{\T^2\times\T^2}
   \area\bigl((y+A)\cap(z+B)\bigr)\,dy\,dz\\
&\qquad=\area(A)\area(B).
\end{aligned}
\end{equation}
Since
\[
   0\le S_n(U)-V_n(Y,U)
   \le\sum_{a<b}
      \area\bigl((Y_a+D_{a,n})\cap(Y_b+D_{b,n})\bigr),
\]
we obtain
\begin{equation}
\label{eq:union-L1}
   \int_{(\T^2)^m}\bigl(S_n(U)-V_n(Y,U)\bigr)\,dY
   \le\sum_{a<b}a_{a,n}a_{b,n}.
\end{equation}
By Lemma~\ref{lem:convex-symdiff}, for fixed $U$,
\[
   na_{a,n}\longrightarrow4h_K(RU_a).
\]
It follows from \eqref{eq:union-L1} that
\begin{equation}
\label{eq:union-volume-limit}
   nV_n(Y,U)\longrightarrow
   4\sum_{a=1}^m h_K(RU_a)
\end{equation}
in $L^1(dY)$.  Moreover, $0\le nV_n(Y,U)\le nS_n(U)$, and the latter is
bounded for fixed $U$.  Uniformly in $Y$,
\[
   (n-2m)\log(1-V_n(Y,U))=-nV_n(Y,U)+o_U(1).
\]
Since the exponential is Lipschitz on bounded intervals,
\eqref{eq:union-volume-limit} therefore implies
\begin{equation}
\label{eq:avoidance-average-limit}
\begin{aligned}
&\int_{(\T^2)^m}(1-V_n(Y,U))^{n-2m}\,dY\\
&\qquad\longrightarrow
   \exp\left\{-4\sum_{a=1}^m h_K(RU_a)\right\}.
\end{aligned}
\end{equation}

The displayed compatibility condition has no effect on this limit.  Indeed,
for $a\ne b$, either endpoint of pair $b$ fails compatibility with pair $a$
only if it lies in $Y_a+D_{a,n}$.  Averaging over the locations of the pairs
and using a union bound gives
\[
   \int_{(\T^2)^m}
      \bigl(1-\1_{\mathcal C(Y,U/n)}\bigr)\,dY
   \le2(m-1)\sum_{a=1}^m a_{a,n}
   =O_U(n^{-1}).
\]
Combining this with \eqref{eq:avoidance-average-limit}, we get
\begin{equation}
\label{eq:compatible-average-limit}
\begin{aligned}
&\int_{(\T^2)^m}
   \1_{\mathcal C(Y,U/n)}
   (1-V_n(Y,U))^{n-2m}\,dY\\
&\qquad\longrightarrow
   \exp\left\{-4\sum_{a=1}^m h_K(RU_a)\right\}.
\end{aligned}
\end{equation}

The global lower bound gives
\[
   V_n(Y,U)\ge\max_a a_{a,n}
   \ge\frac{c_K}{n}\max_a\|U_a\|_2
\]
whenever $U_a\in nK$.  For all sufficiently large $n$, the integrand after
the change of variables is therefore dominated by
\[
   \exp\left\{-c\max_a\|U_a\|_2\right\},
\]
which is integrable on $(\R^2)^m$.  Since every fixed $U_a$ belongs to $nK$
for all sufficiently large $n$, dominated convergence in
\eqref{eq:convex-fixed-matching} yields
\begin{align*}
   n^{2m}P_{m,n}
   &\longrightarrow
   \int_{(\R^2)^m}
      \exp\left\{-4\sum_{a=1}^m h_K(RU_a)\right\}
   \,dU_1\cdots dU_m\\
   &=\left(\frac{\area(K^\circ)}8\right)^m,
\end{align*}
where we used \eqref{eq:polar-integral}.

The number of ordered $m$-tuples of vertex-disjoint unordered pairs is
\[
   \frac{(n)_{2m}}{2^m}.
\]
Their contribution to the $m$th factorial moment therefore tends to
\[
   \frac1{2^m}
   \left(\frac{\area(K^\circ)}8\right)^m
   =\left(\frac{\area(K^\circ)}{16}\right)^m
   =\lambda_K^m.
\]
Together with the negligible overlapping contribution, this proves
\[
   \E[(T_n)_m]\longrightarrow\lambda_K^m
   \qquad(m=1,2,\ldots).
\]
These are the factorial moments of $\Poi(\lambda_K)$, proving
\eqref{eq:convex-twin-poisson}.

The sentence \eqref{eq:twin-sentence} holds exactly when $T_n\ge1$.  Its
limiting probability is therefore $1-e^{-\lambda_K}$, which lies strictly
between zero and one.

For $K=[-r,r]^2$, the polar body is
\[
   K^\circ=\{u:r(|u_1|+|u_2|)\le1\},
\]
which has area $2/r^2$.  For $K=rB_2$, the polar body is
$r^{-1}B_2$, which has area $\pi/r^2$.  The two stated special cases follow.
\end{proof}

\begin{remark}
The Euclidean corollary gives a direct non-trivial first-order limit for
every $0<r<1/2$.  For sufficiently small fixed radius, the result of
Haber, Hershko and M\"uller \cite{HHM2022} is stronger in a different
direction: it shows that the full first-order convergence law fails.
\end{remark}


\subsection{The cube model in arbitrary dimension}

For $h\in\T^D$, put
\[
   A_r(h):=\vol(B_r(0)\symdiff B_r(h)).
\]
Whenever coordinates of $h$ are written as real numbers, we use the
representatives in $[-1/2,1/2)$.

\begin{lemma}
\label{lem:symdiff}
Fix $D\ge1$ and $0<r<1/2$.  As $h\to0$ in $\T^D$,
\begin{equation}
\label{eq:symdiff-expansion}
   A_r(h)=2^D r^{D-1}\sum_{i=1}^D |h_i|+O(\|h\|_1^2).
\end{equation}
Moreover, there are constants $\eta_0,c_0,C_0>0$ such that
\[
   c_0\|h\|_1\le A_r(h)\le C_0\|h\|_1
\]
whenever $\|h\|_\infty\le\eta_0$.  For every $\eta>0$ there is $c_\eta>0$
such that
\[
   A_r(h)\ge c_\eta
   \qquad\text{whenever }d_\infty(0,h)\ge\eta.
\]
In particular, there is a constant $c_1>0$ such that
\begin{equation}
\label{eq:symdiff-global-lower}
   A_r(h)\ge c_1\|h\|_\infty
   \qquad\text{for all }h\in\T^D.
\end{equation}
\end{lemma}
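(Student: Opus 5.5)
The plan is to compute $A_r(h)$ exactly in product form, and then read off all four assertions.

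For $\|h\|_\infty$ small (say $\|h\|_\infty<1/2-r$), neither cube wraps around the torus relative to the other. So $B_r(0)$ and $B_r(h)$ can be treated as the Euclidean cubes $[-r,r]^D$ and $h+[-r,r]^D$. Their intersection is a box with side lengths $2r-|h_i|$, provided $|h_i|\le 2r$. Hence
\[
   A_r(h)=2(2r)^D-2\prod_{i=1}^D(2r-|h_i|).
\]
Expanding the product gives
\[
   \prod_i(2r-|h_i|)=(2r)^D-(2r)^{D-1}\sum_i|h_i|+R(h),
   \qquad |R(h)|\le C\|h\|_1^2 .
\]
Therefore $A_r(h)=2^Dr^{D-1}\sum_i|h_i|+O(\|h\|_1^2)$, which is \eqref{eq:symdiff-expansion}.

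The two-sided local bound comes from the same formula. Choose $\eta_0$ small enough that the quadratic error is at most half the linear term whenever $\|h\|_\infty\le\eta_0$ (use $\|h\|_1\le D\|h\|_\infty$). This gives $c_0\|h\|_1\le A_r(h)\le C_0\|h\|_1$. One could also argue directly: $1-\prod_i(1-x_i)$ lies between $\max_i x_i$ and $\sum_i x_i$ for $x_i\in[0,1]$, where $x_i=|h_i|/(2r)$.

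The step I expect to need care is the bound away from the origin, $A_r(h)\ge c_\eta$ when $d_\infty(0,h)\ge\eta$. Continuity of $A_r$ on the compact torus holds because the volume of a symmetric difference is continuous under translation. So it suffices to show $A_r(h)>0$ for every $h\ne0$, and this is where the torus geometry and the assumption $r<1/2$ enter.

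For $A_r(h)>0$, suppose $A_r(h)=0$. Since $2r<1$, the ball $B_r(0)$ is the product of proper arcs $[-r,r]$ in $\R/\Z$. Then $B_r(h)=\prod_i([-r,r]+h_i)$, so $\vol(B_r(0)\cap B_r(h))=\prod_i \lambda_i$, where $\lambda_i$ is the length of the arc intersection $[-r,r]\cap([-r,r]+h_i)$ in $\R/\Z$. If $A_r(h)=0$, this product equals $(2r)^D$, so every $\lambda_i=2r$. Two arcs of equal length $2r<1$ that coincide up to measure zero are equal, which forces $h_i=0$ in $\R/\Z$. Compactness then gives $c_\eta>0$. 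This argument is the cube analogue of the one in Lemma~\ref{lem:convex-symdiff}, and it is simpler here because of the product structure.

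Finally, \eqref{eq:symdiff-global-lower} combines the two regimes. For $\|h\|_\infty\le\eta_0$ we have $A_r(h)\ge c_0\|h\|_1\ge c_0\|h\|_\infty$. For $d_\infty(0,h)\ge\eta_0$ we have $A_r(h)\ge c_{\eta_0}\ge 2c_{\eta_0}\|h\|_\infty$, since $\|h\|_\infty\le 1/2$. Taking $c_1=\min(c_0,2c_{\eta_0})$ completes the proof.
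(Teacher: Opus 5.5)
Your proposal is correct and follows the paper's proof. The steps match: the exact formula $A_r(h)=2\bigl((2r)^D-\prod_i(2r-|h_i|)\bigr)$ in the non-wrapping range, expanded to give the local estimates; continuity, positivity for $h\neq0$ and compactness for the bound away from the origin; and a two-case combination for \eqref{eq:symdiff-global-lower}. Your coordinatewise arc-intersection argument for $A_r(h)>0$ when $h\neq0$ makes explicit a step the paper only asserts.
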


\begin{proof}
For $\|h\|_\infty<\min(r,1/2-r)$ there is no wrap-around ambiguity, and the
two balls are axis-parallel cubes of side length $2r$.  Their intersection
has volume
\[
   \prod_{i=1}^D(2r-|h_i|).
\]
Therefore
\begin{align*}
   A_r(h)
   &=2\left((2r)^D-\prod_{i=1}^D(2r-|h_i|)\right)\\
   &=2^D r^{D-1}\sum_{i=1}^D |h_i|+O(\|h\|_1^2).
\end{align*}
This proves \eqref{eq:symdiff-expansion}, and the local two-sided estimate
follows by taking $\eta_0$ small enough.

For the compactness assertion, note that $A_r(h)$ is continuous in $h$.  Since
$r<1/2$, the two torus cubes $B_r(0)$ and $B_r(h)$ agree up to null sets only
when $h=0$.  Hence $A_r(h)$ is bounded away from zero on every compact set
$\{h:d_\infty(0,h)\ge\eta\}$.

Finally, combine the local lower bound near zero with the preceding compactness
bound away from zero.  Since $\|h\|_\infty\le1/2$ for our representatives,
after decreasing the constant if necessary we obtain
\eqref{eq:symdiff-global-lower}.
\end{proof}

\begin{proof}[Expectation asymptotic]
For two vertices at displacement $h$, they are adjacent twins exactly when
$d_\infty(0,h)\le r$ and no other sample point lies in
$B_r(0)\symdiff B_r(h)$.  Hence
\begin{equation}
\label{eq:twin-expect-formula}
   \E T_{n,D}(r)
   =
   \binom n2
   \int_{\T^D}
      \1_{\{d_\infty(0,h)\le r\}}
      (1-A_r(h))^{n-2}\,dh.
\end{equation}

Fix a small $\eta>0$.  By Lemma~\ref{lem:symdiff}, the part of the integral
with $d_\infty(0,h)\ge\eta$ is exponentially small in $n$.  On
$d_\infty(0,h)<\eta$, make the change of variables $u=nh$.  Then
\[
   nA_r(u/n)\longrightarrow 2^D r^{D-1}\|u\|_1.
\]
The local lower bound in Lemma~\ref{lem:symdiff} gives an integrable
exponential domination, so dominated convergence yields
\begin{align*}
   &\int_{\T^D}
      \1_{\{d_\infty(0,h)\le r\}}
      (1-A_r(h))^{n-2}\,dh \\
   &\hspace{3em}\sim
   n^{-D}
   \int_{\R^D}
      e^{-2^D r^{D-1}\|u\|_1}\,du  \\
   &\hspace{3em}=
   n^{-D}
   \left(\frac{2}{2^D r^{D-1}}\right)^D
   =
   n^{-D}2^{-D(D-1)}r^{-D(D-1)} .
\end{align*}
Multiplying by $\binom n2$ proves
\[
   \E T_{n,D}(r)
   \sim
   2^{D-1-D^2}r^{-D(D-1)}n^{2-D},
\]
as claimed.
\end{proof}

\begin{proof}[Remaining assertions]
When $D=2$, the graph $G_2(n;r)$ is the convex connection model with
$K=[-r,r]^2$.  The Poisson convergence in
\eqref{eq:poisson-twins} therefore follows from
Theorem~\ref{thm:planar-convex-twins}.  Since the sentence
\eqref{eq:twin-sentence} holds exactly when an adjacent twin pair exists, its
limiting probability is
$
   1-e^{-1/(8r^2)},
$
which lies strictly between zero and one.

Now suppose that $D\ge3$.  The expectation estimate and Markov's inequality give
$
   T_{n,D}(r)\longrightarrow0
$
in probability.
\end{proof}

\begin{remark}
For $D\ge3$, the disappearance of adjacent twins only shows that this
particular fixed-radius obstruction is absent; it does not imply a zero-one
law.
\end{remark}

\section{Thin common-neighborhood cells}
\label{sec:higher-arity}

Fix $D\ge3$ and $0<r<1/2$, and put
\[
   g=1-2r,
   \qquad
   q=\left\lfloor\frac1g\right\rfloor+1,
   \qquad
   M_D=2^{D-2}+1.
\]
Then $q\ge2$ and $qg>1$.  For vertices $x_1,\ldots,x_q$, let
$\Theta_q(x_1,\ldots,x_q)$ say that the roots are distinct, have exactly
$M_D$ common neighbors, and that these common neighbors are pairwise
non-adjacent:
\[
\begin{aligned}
\Theta_q(x_1,\ldots,x_q):={}&
\left(\bigwedge_{1\le a<b\le q}x_a\ne x_b\right)
\wedge
\exists z_1\cdots\exists z_{M_D}\bigg[\\
&\bigwedge_{i=1}^{M_D}\bigwedge_{a=1}^{q} E(z_i,x_a)\\
&\wedge
  \bigwedge_{1\le i<j\le M_D}
  (z_i\ne z_j\wedge \neg E(z_i,z_j))\\
&\wedge
  \forall z\left(
       \left(\bigwedge_{a=1}^{q}E(z,x_a)\right)
       \to \bigvee_{i=1}^{M_D}z=z_i
  \right)
\bigg].
\end{aligned}
\]
Because the graphs are loopless, none of the roots can be among their common
neighbors.

For $t\in\T$, write
\[
   I_r(t)=\{u\in\T:\|u-t\|_{\T}\le r\}.
\]
For $\bar t=(t_1,\ldots,t_q)\in\T^q$, set
\[
   J(\bar t)=\bigcap_{a=1}^q I_r(t_a),
   \qquad
   \ell(\bar t)=|J(\bar t)|.
\]
For a Borel set $J\subseteq\T$, let $\kappa(J)$ be the least number of
subsets of torus diameter at most $r$ needed to cover $J$, with
$\kappa(\varnothing)=0$.  Since $J(\bar t)\subseteq I_r(t_1)$ and
$2r<1$, every non-empty $J(\bar t)$ satisfies $\kappa(J(\bar t))\le2$.

\begin{lemma}
\label{lem:one-dimensional-thin}
There are constants $C,\eta>0$, depending only on $r$ and $q$, such that for
all sufficiently small $s>0$,
\[
   \Prob(0<\ell(\bar T)\le s)\le Cs
\]
and
\[
   \Prob(0<\ell(\bar T)\le s,\ \kappa(J(\bar T))=2)\le Cs^2,
\]
where $\bar T=(T_1,\ldots,T_q)$ is uniform on $\T^q$.  Moreover,
\[
   \Prob(\ell(\bar T)>r+\eta)>0.
\]
\end{lemma}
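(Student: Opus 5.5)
The plan is to read off the structure of $J(\bar t)$ from the endpoints of the arcs $I_r(t_a)$, and then reduce each event to linear constraints on the differences $t_b-t_a$.

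\emph{Step 1: structure of $J(\bar t)$.} Each $I_r(t_a)$ is a closed arc of length $2r<1$. Their intersection $J(\bar t)$ is a finite union of closed arcs. Each component of $J(\bar t)$ is the intersection of several arcs, so it is an arc whose left endpoint is $t_a-r$ for some $a$ and whose right endpoint is $t_b+r$ for some $b$. Its length is the representative in $[0,1)$ of $(t_b+r)-(t_a-r)=t_b-t_a+2r$. If a component has length at most $s<2r$, then $a\ne b$, because $a=b$ would give length $2r$. Hence the component forces
\[
   t_b-t_a\in(-2r,-2r+s] \pmod 1 .
\]
For fixed ordered $(a,b)$ and uniform $\bar T$ this has probability $s$. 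A union bound over the at most $q^2$ ordered pairs gives $\Prob(0<\ell(\bar T)\le s)\le q^2s$, because a nonempty $J$ of length at most $s$ has at least one component, and every component then has length at most $s$.

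\emph{Step 2: the case $\kappa(J)=2$.} If $J(\bar T)$ is connected and $0<\ell\le s\le r$, then $J$ is a single arc of diameter at most $r$, so $\kappa=1$. Therefore $\kappa(J(\bar T))=2$ forces at least two components $A_1,A_2$, each of length at most $s$. Say $A_1$ is bounded by $(a,b)$ and $A_2$ by $(c,d)$. There are three cases.
\begin{enumerate}[label=\textup{(\roman*)}]
\item If the linear forms $t_b-t_a$ and $t_d-t_c$ are linearly independent, which happens whenever $\{a,b\}\ne\{c,d\}$, the two constraints together have probability $s^2$ by Fubini.
\item If $(c,d)=(a,b)$, both components satisfy the same constraint. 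Then $A_1$ and $A_2$ would have the same left endpoint $t_a-r$, so they coincide, which is impossible.
\item If $(c,d)=(b,a)$, the constraints are $x:=t_b-t_a\in(-2r,-2r+s]$ and $-x\in(-2r,-2r+s]$. This requires $4r\equiv0$ up to an error $O(s)$ modulo $1$. Since $0<4r<2$, the only concern is $4r$ close to $1$. I will check directly that for fixed $r$ and all sufficiently small $s$ these two $O(s)$-windows are disjoint. When $r=1/4$ they are $(1/2,1/2+s]$ and $[1/2-s,1/2)$. When $r=1/4+\delta$ with $\delta\ne0$ they are separated by about $4|\delta|$ once $s<4|\delta|$.
\end{enumerate}
Summing over finitely many index patterns yields the bound $Cs^2$.

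\emph{Step 3: positivity.} If all the $t_a$ are equal, then $\ell=2r$. The function $\ell$ is continuous, since for nearby $t_a$ the intersection is an arc of length $2r-\max_{a,b}|t_a-t_b|$. Taking $\eta=r/2$, the set $\{\ell>r+\eta\}$ contains a neighborhood of the diagonal and so has positive measure.

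The main obstacle I expect is Step 2, case (iii). There the two constraints are dependent, and the argument relies on excluding the resonance $4r\equiv 1$ via the ``sufficiently small $s$'' clause. I also need to make sure that the endpoint bookkeeping for components of the intersection is correct in the presence of wrap-around on $\T$.
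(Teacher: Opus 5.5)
Your proof is correct, and it takes a somewhat different route from the paper's.

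\textbf{The paper's route.} The paper passes to the left endpoints $s_a=t_a+r$ of the complementary open arcs, each of length $g=1-2r$. It orders these endpoints cyclically and gets the exact formula $\ell(\bar t)=\sum_i(d_i-g)_+$ in the cyclic spacings. The two events then become a single slab $g<d_i\le g+s$, respectively the intersection of two such slabs, on finitely many cyclic-ordering charts with bounded Jacobian.

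\textbf{Your route.} You read off each component directly as $[t_a-r,t_b+r]$ and take a union bound over ordered index pairs. You use only two facts: a single difference $t_b-t_a$ is uniform on $\T$, and two differences with different supports are jointly uniform on $\T^2$. This avoids charts and Jacobians and gives explicit constants such as $q(q-1)s$. The price is that you must treat the reversed pair $(c,d)=(b,a)$ separately, which you do correctly via the resonance condition.

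\textbf{How the two relate.} In the paper's picture the two endpoint indices of a component are cyclically consecutive. So your case (iii) can only occur when the successor map is a $2$-cycle, that is, when $q=2$. That is exactly the situation in which the paper's two slabs, in $d_1$ and $d_2=1-d_1$, are parallel rather than transversal. The paper does not comment on this, and it is harmless there: $q=2$ forces $r<1/4$, and then two components are impossible. Your argument therefore works for arbitrary $q$ and $r$ once $s$ is small. The paper's parametrization has the advantage that it is reused directly in the lower bound of Theorem~\ref{thm:higher-arity-thin-cells}, through the chart with $d_1=g+u$ and $d_i<g$ for $i\ge2$.

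\textbf{A minor numerical slip in case (iii).} Take $r=1/4+\delta$ with $\delta>0$. The two windows are $(1/2-2\delta,\,1/2-2\delta+s]$ and $[1/2+2\delta-s,\,1/2+2\delta)$. Their gap is $4\delta-2s$, so they are disjoint once $s<2\delta$, not $s<4\delta$. A cleaner way to state it is to add the two constraints, which gives $4r\in(0,2s]+\Z$. This fails for all $s<(4r-1)/2$ when $r>1/4$, and for all $s<2r$ when $r\le1/4$. Your conclusion is unaffected.
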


\begin{proof}
The complement of $I_r(t)$ is an open interval of length $g=1-2r$.
Let $s_a$ be the left endpoint of the complement of $I_r(t_a)$.  Outside a
null set the $s_a$ are distinct.  On a fixed cyclic-ordering chart, after
separating the rotation variable, write
$
   0=s_1<s_2<\cdots<s_q<1
$
and put
\[
   d_i=s_{i+1}-s_i\quad(1\le i\le q),
   \qquad s_{q+1}=s_1+1.
\]
Thus $d_i>0$ and $\sum_i d_i=1$.  The components of
$J(\bar t)$ have lengths
\[
   (d_i-g)_+,
   \qquad 1\le i\le q,
\]
so
\begin{equation}
\label{eq:ell-spacing}
   \ell(\bar t)=\sum_{i=1}^q(d_i-g)_+.
\end{equation}

There are finitely many cyclic-ordering charts, each with bounded Jacobian.
If $0<\ell\le s$, then for some $i$,
\[
   g<d_i\le g+s.
\]
On each spacing simplex this is a union of finitely many slabs of thickness
$s$, and therefore has measure $O(s)$.

Suppose now that $0<\ell\le s<r$ and $\kappa(J)=2$.  A single component of
$J$ would have torus diameter at most $\ell<r$ and hence would be covered by
one set of diameter at most $r$.  Thus $J$ has at least two non-empty
components.  By \eqref{eq:ell-spacing}, two distinct spacings satisfy
\[
   g<d_i\le g+s,
   \qquad
   g<d_j\le g+s.
\]
The intersection of the corresponding two slabs has measure $O(s^2)$.
Summing over the finitely many charts and pairs $i\ne j$ proves the second
bound.

Finally, when all the centers $t_a$ lie in a sufficiently short arc, their
common intersection has length arbitrarily close to $2r$.  Since $2r>r$,
this gives some $\eta>0$ and a positive-measure set on which
$\ell(\bar t)>r+\eta$.
\end{proof}

\begin{lemma}
\label{lem:product-tail}
Let $\bar X=(X_1,\ldots,X_q)$ be uniform on $(\T^D)^q$.  For
$1\le j\le D$, set
\[
   J_j(\bar X)=\bigcap_{a=1}^q I_r(X_{a,j}),
   \qquad
   \ell_j=|J_j(\bar X)|,
   \qquad
   \kappa_j=\kappa(J_j(\bar X)).
\]
There is a constant $C<\infty$ such that, for all sufficiently small $t>0$,
\[
   \Prob\left(
      0<\prod_{j=1}^D\ell_j\le t
      \text{ and }
      \#\{j:\kappa_j=2\}\ge D-1
   \right)
   \le Ct.
\]
\end{lemma}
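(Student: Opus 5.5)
The key structural fact is independence across coordinates: since $\bar X$ is uniform on $(\T^D)^q$, the coordinate tuples $\bar X^{(j)}=(X_{1,j},\ldots,X_{q,j})$, $1\le j\le D$, are independent and each is uniform on $\T^q$. Hence the pairs $(\ell_j,\kappa_j)$ are independent, and each has the law of $(\ell(\bar T),\kappa(J(\bar T)))$ from Lemma~\ref{lem:one-dimensional-thin}. The plan is to bound the event by a union over which coordinates are thin, reducing everything to one-dimensional tail estimates for a product of independent variables.

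First, I would record a single-coordinate distributional bound. Put $\ell=\ell(\bar T)$ and $\kappa=\kappa(J(\bar T))$. Lemma~\ref{lem:one-dimensional-thin} gives $\Prob(0<\ell\le s)\le Cs$ and $\Prob(0<\ell\le s,\kappa=2)\le Cs^2$ for small $s$. Since $\ell\le 2r<1$ always, these hold (with a larger constant) for all $s\in(0,1]$. In particular, the law of $\ell$ on the event $\{\kappa=2\}$ has density $O(s)$ near $0$, while the law of $\ell$ on $\{\ell>0\}$ has a bounded density, in the sense of distribution-function bounds.

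Next comes the case split. On the event in question at most one coordinate has $\kappa_j\ne2$. Fix the exceptional index $j_0$, or none, and write $P=\prod_j\ell_j$. I want to bound $\Prob(0<P\le t)$ where $D-1$ factors $\ell_j$ are restricted to $\{\kappa_j=2\}$, each with tail $\Prob(0<\ell_j\le s)\le Cs^2$, and one factor has tail $\le Cs$. The cleanest route is to work with logarithms: set $Y_j=-\log\ell_j\ge -\log(2r)>0$. Then $\Prob(Y_j\ge y)\le Ce^{-2y}$ for the restricted factors and $\le Ce^{-y}$ for the free factor, and we need $\Prob(\sum Y_j\ge\log(1/t))$. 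Convolving these tails gives $Ce^{-y}$ times a constant, because the sum of one $e^{-y}$-tailed variable and independent $e^{-2y}$-tailed variables still has tail $O(e^{-y})$: the faster decay of the others makes the convolution integral converge, with no polynomial factor in $y$. Concretely, I would induct on the number of factors, using $\Prob(A\cdot B\le t)=\E[F_A(t/B)]$ with $F_A(s)\le Cs$ and $\E[1/B;\,B>0]<\infty$ whenever $\Prob(0<B\le s)\le Cs^2$. The latter holds because $\int s^{-2}\,dF_B(s)$ converges only borderline, so I need some care here. Finally I would sum over the $D+1$ choices of exceptional index.

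I expect the main obstacle to be that borderline integrability. With tail exactly $Cs^2$, the quantity $\E[B^{-1}]=\int_0^\infty\Prob(B^{-1}>x)\,dx\le\int Cx^{-2}\,dx$ is finite, so it actually works. A product of several $s^2$-tailed factors has tail $O(s^2\log(1/s)^{D-2})$, which is still $O(s^{3/2})$, so $\E[B^{-1}]<\infty$ persists. I would verify this by the same inductive convolution, applied first among the restricted factors and then combined with the single free factor, which yields the bound $Ct$. In the case with no exceptional index, all $D$ factors have tail $O(s^2)$, which is even better.
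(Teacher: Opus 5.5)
Your proposal is correct and follows essentially the same route as the paper. Both use independence across coordinates, apply the linear tail $\Prob(0<\ell_{j_0}\le s)\le Cs$ conditionally on the other $D-1$ coordinates, get finiteness of $\E[\ell_j^{-1};\ell_j>0,\kappa_j=2]$ from the quadratic tail, and take a union over the exceptional coordinate. The one simplification is that you do not need the $O(s^2\log(1/s)^{D-2})$ tail for $B=\prod_{j\in S}\ell_j$: by independence, $\E[B^{-1};\ell_j>0,\kappa_j=2\ \forall j\in S]$ factors as $\prod_{j\in S}\E[\ell_j^{-1};\ell_j>0,\kappa_j=2]$. Each factor is comfortably finite, not borderline, because the relevant integral is $\int s^{-1}\,dF$, not $\int s^{-2}\,dF$.
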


\begin{proof}
The pairs $(\ell_j,\kappa_j)$ are independent over the coordinates.  After
increasing $C$ if necessary, Lemma~\ref{lem:one-dimensional-thin} gives, for
all $s>0$,
\[
   \Prob(0<\ell_j\le s)\le Cs,
   \qquad
   \Prob(0<\ell_j\le s,\ \kappa_j=2)\le Cs^2.
\]
The quadratic bound implies
\begin{equation}
\label{eq:inverse-moment}
   \E[\ell_j^{-1};\ \ell_j>0,\ \kappa_j=2]<\infty.
\end{equation}
Indeed,
\[
\begin{aligned}
\E[\ell_j^{-1};\ \ell_j>0,\ \kappa_j=2]
&\le 1+\int_1^\infty
   \Prob(0<\ell_j<t^{-1},\ \kappa_j=2)\,dt\\
&\le 1+C\int_1^\infty t^{-2}\,dt<\infty.
\end{aligned}
\]

Fix $S\subseteq\{1,\ldots,D\}$ with $|S|=D-1$, and let $j_0$ be the
remaining coordinate.  Conditioning on $(\ell_j,\kappa_j)_{j\in S}$ and
using the linear tail estimate for $\ell_{j_0}$ gives
\[
\begin{aligned}
&\Prob\left(
   0<\prod_{j=1}^D\ell_j\le t
   \text{ and }\kappa_j=2\text{ for every }j\in S
\right)\\
&\qquad\le
Ct\,\E\left[
   \prod_{j\in S}\ell_j^{-1};\
   \ell_j>0,\ \kappa_j=2\text{ for every }j\in S
\right]
\le C't,
\end{aligned}
\]
where independence and \eqref{eq:inverse-moment} are used in the last step.
The event in the statement is contained in the union of these events over
the $D$ choices of $S$.
\end{proof}

\begin{lemma}
\label{lem:product-packing}
Let $J_1,\ldots,J_D\subseteq\T$ be Borel sets, each contained in an arc of
length $2r$, and put
$
   K=J_1\times\cdots\times J_D.
$ 
If at most $s$ of the sets $J_j$ require two subsets of torus diameter at
most $r$ to cover them, then $K$ contains at most $2^s$ points whose
pairwise $L^\infty$-distances are all greater than $r$.
\end{lemma}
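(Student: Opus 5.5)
This is a pigeonhole argument on a product partition. For each coordinate $j$, let $c_j=\kappa(J_j)\in\{0,1,2\}$; note $c_j\le2$ because $J_j$ lies in an arc of length $2r<1$, which can be split into two arcs of length $r$. If some $J_j$ is empty, then $K$ is empty and there is nothing to prove. Otherwise, for each $j$ we fix a cover $J_j=P_{j,1}\cup\cdots\cup P_{j,c_j}$ by $c_j$ subsets of torus diameter at most $r$. Passing from a cover to a partition only shrinks pieces, so we may assume the pieces are disjoint. By hypothesis $c_j=2$ for at most $s$ indices, and $c_j=1$ for the rest. The product cells
\[
   P_{1,i_1}\times\cdots\times P_{D,i_D},
   \qquad 1\le i_j\le c_j,
\]
then cover $K$, and there are $\prod_j c_j\le 2^s$ of them.

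The key observation is that each cell has $L^\infty$-diameter at most $r$. For two points $x,y$ in the same cell, each coordinate pair $x_j,y_j$ lies in the same piece $P_{j,i_j}$, so $\|x_j-y_j\|_{\T}\le r$. Taking the maximum over $j$ gives $d_\infty(x,y)\le r$. Consequently, a set of points in $K$ whose pairwise $L^\infty$-distances all exceed $r$ meets each cell in at most one point. By pigeonhole it has at most $2^s$ points.

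The only step requiring care is the claim that an arc of length $2r$ (closed, possibly the whole $I_r(t)$) is covered by two sets of diameter at most $r$. We split it at its midpoint into two closed subarcs of length $r$. Since $r<1/2$, the torus distance between two points of a subarc of length $r$ equals their arc distance, which is at most $r$. For $J_j$ contained in such an arc, the intersections of $J_j$ with the two subarcs give the required cover, so $\kappa(J_j)\le2$. No probabilistic input is needed, and none of the earlier lemmas is used. The main point to state carefully is the coordinatewise reduction of $d_\infty$ in the cell-diameter bound, which is immediate from the definition of $d_\infty$ as a maximum of torus distances.
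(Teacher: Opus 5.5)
Your proposal is correct and is essentially the paper's proof: cover each $J_j$ by one or two pieces of torus diameter at most $r$, note that the at most $2^s$ product cells have $L^\infty$-diameter at most $r$, and apply pigeonhole. The extra details you give, splitting the arc at its midpoint and bounding $d_\infty$ coordinate by coordinate, are sound; the paper simply leaves them implicit.
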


\begin{proof}
Cover each $J_j$ by one or two sets of torus diameter at most $r$.  Their
products cover $K$ by at most $2^s$ sets of $L^\infty$-diameter at most $r$.
A pairwise $r$-separated subset of $K$ contains at most one point from each
product set.
\end{proof}

\begin{proof}[Proof of Theorem~\ref{thm:higher-arity-thin-cells}]
Let
\[
   K(\bar X)=\bigcap_{a=1}^q B_r(X_a).
\]
By the product form of $L^\infty$ balls,
\[
   K(\bar X)=J_1(\bar X)\times\cdots\times J_D(\bar X),
   \qquad
   V(\bar X):=\vol K(\bar X)=\prod_{j=1}^D\ell_j.
\]
Conditioned on the roots, the number of non-root sample points in
$K(\bar X)$ has distribution $\operatorname{Bin}(n-q,V(\bar X))$.

We first prove the upper bound.  If $\Theta_q(1,\ldots,q)$ holds, then
$K(\bar X)$ contains $M_D=2^{D-2}+1$ sample points whose pairwise
$L^\infty$-distances are greater than $r$.  By
Lemma~\ref{lem:product-packing}, at least $D-1$ coordinates satisfy
$\kappa_j=2$.  For fixed $M_D$, uniformly in $0\le V\le1$,
\[
   \Prob(\operatorname{Bin}(n-q,V)=M_D)
   \le C(nV)^{M_D}e^{-cnV}
\]
for constants $C,c>0$.  Hence
\[
\begin{aligned}
\Prob\bigl(G_D(n;r)\models\Theta_q(1,\ldots,q)\bigr)
\le C\E\left[
   \1_{\{\#\{j:\kappa_j=2\}\ge D-1\}}
   (nV)^{M_D}e^{-cnV}
\right].
\end{aligned}
\]

Choose $t_0>0$ so that Lemma~\ref{lem:product-tail} applies on
$(0,t_0]$.  Decompose the positive values of $V$ into
\[
   0<V\le\frac2n,
   \qquad
   \frac{2^m}{n}<V\le\frac{2^{m+1}}n
   \quad(m\ge1).
\]
For $2^{m+1}/n\le t_0$, Lemma~\ref{lem:product-tail} bounds the probability
of the relevant event with $V\le2^{m+1}/n$ by $C2^m/n$, while on the same
interval
\[
   (nV)^{M_D}e^{-cnV}
   \le C(2^m)^{M_D}e^{-c2^m}.
\]
The contribution of all these intervals is therefore at most
\[
   \frac Cn\sum_{m\ge0}2^m(2^m)^{M_D}e^{-c2^m}
   =O(n^{-1}).
\]
The part with $V\ge t_0/2$ is exponentially small.  This proves the upper
bound.

For the lower bound, choose coordinate $1$ to be thin.  On a cyclic spacing
chart from Lemma~\ref{lem:one-dimensional-thin}, impose
\[
   d_1=g+u,
   \qquad 0<u<\varepsilon,
\]
and require $d_i<g$ for $2\le i\le q$.  Such a chart exists because
$qg>1$: at $u=0$, the point
\[
   d_2=\cdots=d_q=\frac{1-g}{q-1}
\]
lies strictly between zero and $g$.  Thus, for all sufficiently small $u$,
the remaining spacings may be chosen in a fixed compact set of positive
relative measure around this point.  On this chart $J_1$ is a single
interval of length $u$.

In each coordinate $2,\ldots,D$, choose the $q$ centers in a fixed compact
positive-measure set on which they lie in a short common arc.  After
shrinking this set, there is an $\eta>0$ such that throughout the chart
\[
   |J_j|>r+\eta,
   \qquad 2\le j\le D.
\]
Now restrict
\[
   u=\frac{s}{n},
   \qquad 1\le s\le2.
\]
Then
\[
   V(\bar X)=\frac{s}{n}A(\mathbf y),
\]
where $A(\mathbf y)$ is bounded above and below by positive constants on the
chosen compact chart.  It follows that the conditional probability of
having exactly $M_D$ non-root sample points in $K(\bar X)$ is bounded below
by a positive constant, uniformly on the chart.

Choose
\[
   0<\delta<\frac13\min\{\eta,1-2r\}.
\]
Every long interval $J_j$, $2\le j\le D$, contains two small subintervals
such that the torus distance between any point of one and any point of the
other is greater than $r$.  Indeed, inside a lift of $J_j$ one may take one
subinterval near the left endpoint and a second beginning at distance
$r+2\delta$; the assumptions on $\delta$ keep both subintervals inside
$J_j$ and ensure that their ordinary separation is the shorter torus
separation.  Taking products over the $D-1$ long coordinates gives
$2^{D-1}$ subboxes of $K(\bar X)$ which are pairwise separated by more than
$r$ in $L^\infty$-distance.  Since
\[
   M_D=2^{D-2}+1\le2^{D-1},
\]
choose $M_D$ of them.  Conditional on exactly $M_D$ points falling in
$K(\bar X)$, the probability that one point falls in each chosen subbox is
bounded below by a positive constant.

On the fixed spacing chart the root density is bounded above and below on
the chosen compact set.  The restriction $1/n\le u\le2/n$ contributes a
factor of order $1/n$, while all other variables range over sets of fixed
positive measure.  Hence
\[
   \Prob\bigl(G_D(n;r)\models\Theta_q(1,\ldots,q)\bigr)
   \ge \frac{c_{D,r}}n.
\]
Together with the upper bound, this proves the theorem.
\end{proof}

\begin{remark}
The order $1/n$ is the natural scale for an extension formula with one free
selector.  In the Euclidean two-dimensional model, the arithmetic
interpretation of \cite{HHM2022} rests on much stronger information: uniform
joint Poisson estimates and concentration for families of extensions.  The
present theorem is only a marginal estimate.  Establishing an appropriate
joint extension theorem for the torus $L^\infty$ model remains a separate
problem.
\end{remark}

\begin{remark}
If $r_n\ge1/2$ eventually, then for every $x,y\in\T^D$ one has
\[
   d_\infty(x,y)\le1/2\le r_n.
\]
Hence $G_D(n;r_n)=K_n$ deterministically for all sufficiently large $n$.
Lemma~\ref{lem:empty-complete-matching} then implies that every first-order
sentence has an eventual truth value, so the first-order zero-one law holds in
this regime.
\end{remark}


\begin{thebibliography}{99}

\bibitem{Fagin1976}
R. Fagin.
\newblock Probabilities on finite models.
\newblock \emph{Journal of Symbolic Logic}, 41(1):50--58, 1976.

\bibitem{Gilbert1961}
E. N. Gilbert.
\newblock Random plane networks.
\newblock \emph{Journal of the Society for Industrial and Applied Mathematics},
9(4):533--543, 1961.

\bibitem{Glebskii1969}
Yu. V. Glebskii, D. I. Kogan, M. I. Liogonkii, and V. A. Talanov.
\newblock Range and degree of realizability of formulas in the restricted
predicate calculus.
\newblock \emph{Cybernetics}, 5:142--154, 1969.

\bibitem{HHM2022}
S. Haber, T. Hershko, and T. M\"uller.
\newblock Disproving the limit law for random geometric graphs.
\newblock Preprint, 2022.
\newblock \url{https://u.cs.biu.ac.il/~simi/onlineavailablepapers/nolimitlrggs.pdf}.


\bibitem{LangharstRoysdonZvavitch2022}
D. Langharst, M. Roysdon, and A. Zvavitch.
\newblock General measure extensions of projection bodies.
\newblock \emph{Proceedings of the London Mathematical Society},
125(5):1083--1129, 2022.
\newblock \url{https://doi.org/10.1112/plms.12477}.

\bibitem{Libkin2004}
L. Libkin.
\newblock \emph{Elements of Finite Model Theory}.
\newblock Springer, 2004.

\bibitem{McColm1999}
G. L. McColm.
\newblock First order zero-one laws for random graphs on the circle.
\newblock \emph{Random Structures \& Algorithms}, 14(3):239--266, 1999.

\bibitem{Penrose2003}
M. Penrose.
\newblock \emph{Random Geometric Graphs}.
\newblock Oxford Studies in Probability, vol. 5. Oxford University Press,
2003.

\bibitem{ShelahSpencer1988}
S. Shelah and J. Spencer.
\newblock Zero-one laws for sparse random graphs.
\newblock \emph{Journal of the American Mathematical Society}, 1(1):97--115,
1988.

\bibitem{Spencer2001}
J. Spencer.
\newblock \emph{The Strange Logic of Random Graphs}.
\newblock Algorithms and Combinatorics, vol. 22. Springer, 2001.

\bibitem{SpencerAgarwal2006}
J. Spencer and A. Agarwal.
\newblock Undecidable statements and the zero-one law in random geometric
graphs.
\newblock Unpublished manuscript, 2006.

\end{thebibliography}
\end{document}